\newcommand{\U}{{\mathcal U}}
\newcommand{\0}{{\mathbf 0}}
\newcommand{\C}{{\mathbb C}}
\newcommand{\Z}{{\mathbb Z}}
\newcommand{\Fdot}{\mathbf F^\bullet}
\newcommand{\mult}{{\operatorname{mult}}}
\newcommand{\mpr}{\operatorname{mpr}}
\newtheorem{defn0}{Definition}[section]
\newtheorem{prop0}[defn0]{Proposition}
\newtheorem{conj0}[defn0]{Conjecture}
\newtheorem{thm0}[defn0]{Theorem}
\newtheorem{lem0}[defn0]{Lemma}
\newtheorem{corollary0}[defn0]{Corollary}
\newtheorem{example0}[defn0]{Example}
\newtheorem{remark0}[defn0]{Remark}
\newtheorem{question0}[defn0]{Question}
\newtheorem{exercise0}[defn0]{Exercise}
\newenvironment{defn}{\begin{defn0}}{\end{defn0}}
\newenvironment{prop}{\begin{prop0}}{\end{prop0}}
\newenvironment{conj}{\begin{conj0}}{\end{conj0}}
\newenvironment{thm}{\begin{thm0}}{\end{thm0}}
\newenvironment{lem}{\begin{lem0}}{\end{lem0}}
\newenvironment{cor}{\begin{corollary0}}{\end{corollary0}}
\newenvironment{exm}{\begin{example0}\rm}{\end{example0}}
\newenvironment{rem}{\begin{remark0}\rm}{\end{remark0}}
\newenvironment{ques}{\begin{question0}\rm}{\end{question0}}
\newcommand{\defref}[1]{Definition~\ref{#1}}
\newcommand{\propref}[1]{Proposition~\ref{#1}}
\newcommand{\thmref}[1]{Theorem~\ref{#1}}
\newcommand{\lemref}[1]{Lemma~\ref{#1}}
\newcommand{\exref}[1]{Example~\ref{#1}}
\newcommand{\secref}[1]{Section~\ref{#1}}
\newcommand{\quesref}[1]{Question~\ref{#1}}
\newcommand{\mbf}[1]{{\mathbf #1}}
\title[Minkowski Inequalities  and non-isolated hypersurface singularities]{Minkowski Inequalities and non-isolated hypersurface singularities}
\subjclass[2010]{32S25, 32S15, 32S55}
\keywords{}
\author{David B. Massey}
\date{}
\begin{document}

\begin{abstract} We derive a number of inequalities involving L\^e numbers of non-isolated hypersurface singularities. In particular, we derive L\^e-Iomdine formulas with inequalities and use these, together with Teissier's Minkowski inequalities for sectional Milnor numbers of isolated hypersurface singularities, to arrive at ``Minkowski inequalities'' for L\^e numbers.
\end{abstract}

\maketitle




\section{Introduction}

Throughout this paper, we use the following notation:  $\U$ is an open neighborhood of the origin in $\C^{n+1}$, $f: (\U,\0)\rightarrow (\C,0)$ is a complex analytic function which is not locally constant at $\0$, $\Sigma f$ is the critical locus of $f$, $\0\in\Sigma f$, and $s:=\dim_\0\Sigma f$. It will be important to note that  $\0\in\Sigma f$ implies that $\mult_\0f\geq 2$.

\smallskip

Since we introduced the L\^e numbers $\lambda^j_{f, \mbf z}$ in \cite{levar1}, we have proved many results about them, but have never looked seriously at inequalities. However, recently, we have been motivated to consider inequalities with L\^e numbers for two reasons: the recent work of June Huh in \cite{huhmilnor} reminded us of the beauty of Teissier's now-classic result on Minkowski inequalities in \cite{teissierappend} (see \secref{sec:prelim}) for sectional Milnor numbers of isolated hypersurface singularities, and a conjecture of Bobadilla (see \secref{sec:questions}) made us wonder if the conjecture could be proved in at least some cases via inequalities on L\^e numbers.

\medskip

Our main tool from obtaining inequalities when $s\geq 1$ are our new L\^e-Iomdine formulas with inequalities in \thmref{thm:leiom}. In a sense, these are not new; they follow from the previous proofs of Theorem 4.5 in \cite{lecycles} with a few modifications. We also improve our prior L\^e-Iomdine equalities by giving a new upper-bound on the {\it maximum polar multiplicity} (see \cite{lecycles} or \defref{def:mpr}) in \propref{prop:newmpr}.

\medskip

Using the L\^e-Iomdine inequalities, we prove in \propref{prop:funbound}:

\medskip

\noindent {\bf Proposition}: {\it Suppose that the L\^e numbers of $f$ at the origin with respect to coordinates $\mbf z$ are defined. Then,

$$
\sum_{j=0}^s (-1+\mult_\0 f)^j\lambda^j_{f, \mbf z}\geq(-1+\mult_\0f)^{n+1},
$$
with equality holding if $f$ is a homogeneous polynomial.}

\medskip

To generalize Teissier's Minkowski inequalities to L\^e numbers and non-isolated singularities, we need to use restrictions to {\bf generic} linear subspaces and then use generic coordinates for calculating the L\^e numbers. This is unusual for us, as we normally like to use coordinates which are ``generic'' in some sense which is easy to check in examples, such as just being generic enough that the L\^e numbers are defined (as above). It also means that, in this paper,  we will use two notations for the L\^e numbers (at $\0$): $\lambda^j_{f, \mbf z}$ when we are going to specify how generic the coordinates $\mbf z$ need to be, and $\lambda^j_f$ when the coordinates are arbitrarily generic (i.e., so generic that all $\lambda^j_f$ attain their generic values at a given point).

We also need to alter Teissier's notation for the sectional Milnor numbers $\mu^{(k)}(f)$. We let $f^{[k]}$ denote the restriction of $f$ to a generic linear subspace of dimension $k$. With this notation, rather than writing $\mu^{(k)}(f)$ for a sectional Milnor number, we will write $\mu(f^{[k]})$. The point is that we can now refer to $\lambda^j_{f^{[k]}}$, where we will always use generic coordinates to accompany the generic linear subspace restriction when $k\neq n+1$. Following Teissier, we define $\lambda^0_{f^{[0]}}=\mu^{(0)}(f)=1$.

\medskip

The inequalities that we obtain when $s=1$ are sharper than what we obtain we $s\geq 2$. So we separate the cases where $s=1$ and where $s\geq 2$ into different sections of this paper. 

\medskip

When $s=1$, we prove in \thmref{thm:mainone}:

\medskip

\noindent{\bf Theorem}: Suppose that $n\geq 1$, $s=1$, and the L\^e numbers of $f$ at the origin are defined. Then we have the following inequality:
$$
\frac{\lambda^0_{f, \mbf z}+\big(\mu(f^{[n]})-\mu(f^{[n-1]})+1\big)\lambda^1_{f, \mbf z}}{\mu(f^{[n]})}\ \geq \ \frac{\mu(f^{[n]})}{\mu(f^{[n-1]})}.
$$

\medskip

Note that if $s=0$, then $\lambda^1_{f, \mbf z}=0$ and $\lambda^0_{f, \mbf z}=\mu(f)$, so that the inequality reduces to Teissier's inequality.

\medskip

The case when $s\geq 2$ is complex enough that we will not state the result in the introduction. Also, there are a number of other interesting, relatively easy, inequalities with L\^e numbers (and relative polar numbers) which we prove along the way.

\medskip

\section{Preliminary definitions and results}\label{sec:prelim}

If $s=0$, then the Milnor number $\mu(f)$ of $f$ at the origin is defined. In \cite{teissierappend}, Teissier defines the sectional Milnor numbers $\mu^{(j)}(f)=\mu(f^{[j]})$ for $0\leq j\leq n+1$ by letting $\mu(f^{[j]})$ equal the Milnor number of $f$ restricted to a generic $j$-dimensional linear subspace of $\C^{n+1}$, where $\mu(f^{[0]})=1$ by definition. Note that $\mu(f^{[1]})=-1+\mult_\0 f$ and $\mu(f^{[n+1]})=\mu(f)$. Teissier shows in  (in a more-general context) that one has the following Minkowski inequalities, i.e., that $\mu(f^{[j]})$ is a log-convex sequence:

\medskip

\begin{thm}\textnormal{(Teissier, \cite{teissierappend}}\textnormal{)}\label{thm:teissier} Suppose that $s=0$. Then the following inequalities hold:

\smallskip

$$\frac{\mu(f^{[n+1]})}{\mu(f^{[n]})}\geq \frac{\mu(f^{[n]})}{\mu(f^{[n-1]})}\geq\cdots\geq \frac{\mu(f^{[2]})}{\mu(f^{[1]})}\geq \frac{\mu(f^{[1]})}{\mu(f^{[0]})}.$$
\end{thm}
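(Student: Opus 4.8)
The plan is to realize the sectional Milnor numbers as mixed multiplicities of a pair of ideals in the local ring $R=\mathcal O_{\C^{n+1},\0}$ and then to invoke the Minkowski inequality for mixed multiplicities. Write $\mathfrak{m}$ for the maximal ideal of $R$ and $J=J(f)=\big(\partial f/\partial z_0,\dots,\partial f/\partial z_n\big)$ for the Jacobian ideal; since $s=0$, the ideal $J$ is $\mathfrak{m}$-primary and $\mu(f)=\dim_\C R/J$. The first step is Teissier's geometric interpretation of the mixed multiplicities $e\big(\mathfrak{m}^{[n+1-i]},J^{[i]}\big)$ (the mixed multiplicity taken with $n+1-i$ copies of $\mathfrak{m}$ and $i$ copies of $J$): for generic coordinates one has
$$
\mu(f^{[i]})\ =\ e\big(\mathfrak{m}^{[\,n+1-i\,]},\,J^{[\,i\,]}\big),\qquad 0\le i\le n+1.
$$
The boundary cases check out, since $e\big(\mathfrak{m}^{[n+1]}\big)=\mult_\0(R)=1=\mu(f^{[0]})$ and $e\big(J^{[n+1]}\big)=e(J)=\mu(f)=\mu(f^{[n+1]})$, while the intermediate equalities come from slicing by generic hyperplanes and the transversality of a generic linear section to the relevant polar varieties; this is exactly where the genericity of the coordinates enters.

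Granting this identification, the asserted chain of inequalities is precisely the statement that the sequence $i\mapsto\mu(f^{[i]})$ is log-convex, i.e.
$$
\mu(f^{[i]})^2\ \le\ \mu(f^{[i-1]})\,\mu(f^{[i+1]}),\qquad 1\le i\le n,
$$
since this rearranges to $\mu(f^{[i+1]})/\mu(f^{[i]})\ge \mu(f^{[i]})/\mu(f^{[i-1]})$, which is the very chain asserted in the theorem. Thus it suffices to prove the Minkowski inequality $e\big(\mathfrak{m}^{[n+1-i]},J^{[i]}\big)^2\le e\big(\mathfrak{m}^{[n+2-i]},J^{[i-1]}\big)\,e\big(\mathfrak{m}^{[n-i]},J^{[i+1]}\big)$ for the two $\mathfrak{m}$-primary ideals $\mathfrak{m}$ and $J$.

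To prove this mixed-multiplicity inequality I would reduce to the two-dimensional case. Choosing a generic sequence of superficial elements for the pair $(\mathfrak{m},J)$ and cutting down, the inequality for a fixed triple of consecutive indices reduces to the base case $\dim R=2$, where the three relevant numbers are $e(J)$, $e(\mathfrak{m},J)$, and $e(\mathfrak{m})$, and the claim collapses to the single statement $e(\mathfrak{m},J)^2\le e(\mathfrak{m})\,e(J)$. Here I would pass to a common (normalized) resolution $\pi\colon X\to(\C^{n+1},\0)$ on which both $\mathfrak{m}\mathcal O_X=\mathcal O_X(-A)$ and $J\mathcal O_X=\mathcal O_X(-B)$ become invertible, with $A,B$ effective divisors supported on the exceptional curve. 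Then $e(\mathfrak{m})=-A^2$, $e(J)=-B^2$, and $e(\mathfrak{m},J)=-A\cdot B$, so the claim becomes $(A\cdot B)^2\le (A^2)(B^2)$.

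The crux --- and the main obstacle --- is this last inequality, which is a reverse Cauchy--Schwarz: it holds because the intersection form on divisors supported on the exceptional fiber of $\pi$ is negative definite (Grauert--Mumford), so that $\langle A,B\rangle:=-A\cdot B$ is a positive-definite inner product and ordinary Cauchy--Schwarz yields $(A\cdot B)^2=\langle A,B\rangle^2\le\langle A,A\rangle\,\langle B,B\rangle=(A^2)(B^2)$. The two genuinely substantial inputs are therefore (i) Teissier's identification of $\mu(f^{[i]})$ with a mixed multiplicity, which rests on the theory of relative polar varieties and the principle of specialization of integral dependence, and (ii) the negative-definiteness of the exceptional intersection form underlying the surface case; the reduction to dimension two by superficial elements is the routine glue between them. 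One further expects equality to propagate through Cauchy--Schwarz exactly when $A$ and $B$ are proportional, which is the source of the equality statements (e.g.\ for homogeneous $f$) flagged elsewhere in the paper.
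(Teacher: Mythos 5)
The paper gives no proof of this statement—it is quoted as Teissier's theorem with the proof deferred to the cited reference—and your outline is precisely the argument of that reference: identify the sectional Milnor numbers $\mu(f^{[i]})$ with the mixed multiplicities $e\big(\mathfrak{m}^{[n+1-i]},J^{[i]}\big)$ of the maximal and Jacobian ideals, reduce the resulting log-convexity statement to dimension two by superficial elements, and conclude there via Cauchy--Schwarz for the negative-definite exceptional intersection form on a resolution. So your proposal is correct and takes essentially the same route as the source the paper relies on, with the one caveat that the reduction must cut by generic elements of $J$ as well as of $\mathfrak{m}$ (so that all three mixed multiplicities in a consecutive triple descend and the two-dimensional quotient is a normal complete-intersection germ to which Mumford's negative-definiteness applies), which your phrase ``superficial elements for the pair'' correctly subsumes.
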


\medskip

As an immediate corollary to \thmref{thm:teissier}, there is the well-known:

\medskip

\begin{cor}\label{cor:teissier} Suppose that $s=0$. Then, for all $k$ with $0\leq k\leq n$, 
$$\mu(f^{[k+1]})\geq [(\mult_\0 f)-1]\mu(f^{[k]}) \hskip 0.2in \textnormal{ and so }\hskip 0.2in\mu(f^{[k+1]})\geq [(\mult_\0 f)-1]^{k+1}.$$
\end{cor}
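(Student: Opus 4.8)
The plan is to read off both displayed inequalities directly from the chain of ratios in \thmref{thm:teissier}, exploiting the two normalizing values $\mu(f^{[0]})=1$ and $\mu(f^{[1]})=(\mult_\0 f)-1$ recorded at the start of \secref{sec:prelim}. First I would observe that the rightmost, and hence smallest, ratio in Teissier's chain is
$$
\frac{\mu(f^{[1]})}{\mu(f^{[0]})}=\frac{(\mult_\0 f)-1}{1}=(\mult_\0 f)-1.
$$
Since \thmref{thm:teissier} asserts that every ratio $\mu(f^{[k+1]})/\mu(f^{[k]})$ with $0\leq k\leq n$ dominates this smallest one, I obtain
$$
\frac{\mu(f^{[k+1]})}{\mu(f^{[k]})}\ \geq\ (\mult_\0 f)-1 ,
$$
and clearing the positive denominator $\mu(f^{[k]})$ yields the first asserted inequality $\mu(f^{[k+1]})\geq [(\mult_\0 f)-1]\,\mu(f^{[k]})$.

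Next I would deduce the second inequality by a short induction on $k$. The base case $k=0$ is the identity $\mu(f^{[1]})=(\mult_\0 f)-1=[(\mult_\0 f)-1]^1$. For the inductive step, combining the first inequality with the inductive hypothesis $\mu(f^{[k]})\geq [(\mult_\0 f)-1]^{k}$ gives
$$
\mu(f^{[k+1]})\ \geq\ [(\mult_\0 f)-1]\,\mu(f^{[k]})\ \geq\ [(\mult_\0 f)-1]^{k+1},
$$
which closes the induction.

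There is essentially no substantive obstacle here; the content is simply that the ratios of a log-convex sequence are smallest at one end, combined with the two known normalizations. The only point deserving a word is that the division and the induction tacitly require $\mu(f^{[k]})>0$. This is guaranteed because $\0\in\Sigma f$ forces $\mult_\0 f\geq 2$, so $\mu(f^{[1]})=(\mult_\0 f)-1\geq 1>0$, and the first inequality then propagates positivity up the chain from $\mu(f^{[0]})=1$. Thus the corollary is indeed immediate from \thmref{thm:teissier}.
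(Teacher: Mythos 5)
Your proof is correct and is exactly the argument the paper intends: the corollary is stated as an immediate consequence of \thmref{thm:teissier}, obtained by bounding each ratio $\mu(f^{[k+1]})/\mu(f^{[k]})$ below by the last ratio $\mu(f^{[1]})/\mu(f^{[0]})=(\mult_\0 f)-1$ and then iterating. Your added care about positivity of the $\mu(f^{[k]})$ is a reasonable (if tacit in the paper) touch, and nothing more needs to be said.
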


\smallskip

\noindent Note that the second inequality above is also immediate from Corollary 12.4 of \cite{fulton}.

\medskip

Throughout this section and paper, we use the L\^e and relative polar cycles, $\Lambda^j_{f, \mbf z}$ and $\Gamma^j_{f, \mbf z}$, respectively, of $f$ with respect to $\mbf z$  as defined and investigated in \cite{lecycles}. The L\^e and relative polar numbers, $\lambda^j_{f, \mbf z}(\mbf p)$ and $\gamma^j_{f, \mbf z}(\mbf p)$, respectively, of $f$ with respect to $\mbf z$ at the point $\mbf p=(p_0, \dots, p_n)$ are defined by:
$$
\lambda^j_{f, \mbf z}(\mbf p) :=\big(\Lambda_{f, \mbf z}^j\cdot V(z_0-p_0, \dots, z_{j-1}-p_{j-1})\big)_{\mbf p}
$$
(respectively,
$$
\gamma^j_{f, \mbf z}(\mbf p) :=\big(\Gamma_{f, \mbf z}^j\cdot V(z_0-p_0, \dots, z_{j-1}-p_{j-1})\big)_{\mbf p},
$$
provided that, for all $j$,  $\Lambda_{f, \mbf z}^j$ and $\Gamma_{f, \mbf z}^j$ are purely $j$-dimensional and the intersections are proper at $\mbf p$, i.e., $\mbf p$ is an isolated point in the intersection.

In this paper, since we concentrate our attention at the origin (for convenience), when we omit the the point $\mbf p$ from the notation, we mean that $\mbf p=\0$.

When $j=0$, we mean that $\lambda^0_{f, \mbf z}(\mbf p):=\big(\Lambda_{f, \mbf z}^0\big)_{\mbf p}$, i.e., the coefficient of $\mbf p$ in the $0$-dimensional cycle $\Lambda_{f, \mbf z}^0$. Also, note that $\Gamma_{f, \mbf z}^0=0$ and so $\gamma^0_{f, \mbf z}(\mbf p)=0$.

Note that if the L\^e numbers (respectively, relative polar numbers) exist at $\mbf p$, they exist at all points near $\mbf p$.

\medskip

We remind the reader that the generic values of the L\^e numbers, that is, the values for generic $\mbf z$, are {\bf not} necessarily the minimum values of the L\^e numbers. However, recall Corollary 4.16 from \cite{lecycles}, which tells us that the tuple of L\^e numbers is upper-semicontinuous in the lexicographical ordering (from higher-dimensional L\^e number to lower).  This implies that for generic $\mbf z$, $\lambda^s_{f, \mbf z}$ attains its minimum value, and then for generic $\mbf z$ among those, $\lambda^{s-1}_{f, \mbf z}$ attains its minimum value, and in general, if $\Omega$ is an open, dense set of coordinates such that the L\^e numbers are defined for all $\mbf z\in\Omega$, and the higher-dimensional L\^e numbers attain their generic values for all $\mbf z\in \Omega$, then, for $\mbf z\in\Omega$, the generic value of the next-lower L\^e number is its minimum value  for $\mbf z\in\Omega$.

\medskip

Another bit of notation: if we have an effective cycle $\Omega:=\sum m_V V$, where the $V$ are irreducible analytic sets and the $m_V$ are positive integers, then we let the multiplicity of $\Omega$ at a point $p$ equal $\mult_p\Omega=\sum m_V\mult_pV$.

\bigskip

We should also remind the reader that we have the following:

\begin{prop}\label{prop:multint} For generic $\mbf z$, $\gamma^j_{f,\mbf z}$ is equal to the multiplicity of the relative polar cycle $\Gamma^j_{f, \mbf z}$ at the origin, i.e., 
$$\gamma^j_{f,\mbf z}:=\Big(\Gamma^j_{f, \mbf z}\cdot V(z_0, \dots, z_{j-1})\Big)_\0=\mult_\0 \Gamma^j_{f, \mbf z}.$$

 Furthermore, the analogous statement for the L\^e numbers is true, i.e., for generic $\mbf z$, 
 $$\lambda^j_{f,\mbf z}:=\Big(\Lambda^j_{f, \mbf z}\cdot V(z_0, \dots, z_{j-1})\Big)_\0=\mult_\0 \Lambda^j_{f, \mbf z}.$$
\end{prop}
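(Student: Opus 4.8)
The plan is to deduce both equalities from a single fact of local intersection theory: if $C$ is a purely $d$-dimensional analytic cycle in $\C^{n+1}$ and $L$ is a linear subspace of complementary dimension $n+1-d$ through $\0$ meeting $|C|$ properly at $\0$, then $(C\cdot L)_\0\geq\mult_\0 C$, with equality exactly when $L$ meets the tangent cone $C_\0 C$ only at $\0$; moreover the subspaces realizing this transversality form a dense open subset of the relevant Grassmannian (this is classical; compare the multiplicity theory of \cite{fulton}). Since both $(\,\cdot\,)_\0$ and $\mult_\0$ are additive over the components of a cycle, I would first reduce to the case in which $\Gamma^j_{f,\mbf z}$ (respectively $\Lambda^j_{f,\mbf z}$) is irreducible, so that only the position of a single tangent cone relative to $V(z_0,\dots,z_{j-1})$ is at issue.

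The feature that keeps the statement from being purely formal is that the cycle and the slicing subspace are not independent: both $\Gamma^j_{f,\mbf z}$ and $V(z_0,\dots,z_{j-1})$ depend only on the codimension-$j$ subspace $W:=V(z_0,\dots,z_{j-1})=\operatorname{span}(e_j,\dots,e_n)$, since $\Gamma^j_{f,\mbf z}$ is the closure off $\Sigma f$ of the locus where $df$ annihilates $W$, and $\Lambda^j_{f,\mbf z}$ is the $\Sigma f$-part of the same cycle $V(\partial f/\partial z_j,\dots,\partial f/\partial z_n)$. Thus I would recast the claim as: for $W$ in a dense open subset of the Grassmannian $G(n+1-j,n+1)$, the subspace $W$ meets the tangent cone at $\0$ of the ($W$-dependent) cycle only at $\0$. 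Granting this, the general fact applied componentwise yields $(\Gamma^j_{f,W}\cdot W)_\0=\mult_\0\Gamma^j_{f,W}$ and $(\Lambda^j_{f,W}\cdot W)_\0=\mult_\0\Lambda^j_{f,W}$, which are the two displayed equalities.

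The main obstacle is establishing this transversality in the presence of the coupling, since one cannot simply appeal to genericity of $L$ for a fixed cycle. I would handle it in the family over the Grassmannian: form the universal polar (respectively Lê) variety $\mathcal P\subseteq G\times\U$ whose fiber over $W$ is $\Gamma^j_{f,W}$ (respectively $\Lambda^j_{f,W}$), together with the universal incidence $\{(W,x):x\in W\}$, and show by a dimension count that the bad locus of those $W$ for which $W$ meets the fiber's tangent cone at $\0$ in positive dimension is a proper analytic subset of $G$; its complement is then the required dense open set of coordinate systems. Alternatively, and more in the spirit of \cite{lecycles}, one may invoke the generic behavior of the polar and Lê cycles established there to obtain, on a dense open set $\Omega$ of coordinates, the simultaneous validity of the purely-$j$-dimensional, proper-intersection, and tangent-cone transversality conditions; as these are each open, dense, and finite in number, they hold together on $\Omega$, which is all that is needed to conclude $\gamma^j_{f,\mbf z}=\mult_\0\Gamma^j_{f,\mbf z}$ and $\lambda^j_{f,\mbf z}=\mult_\0\Lambda^j_{f,\mbf z}$.
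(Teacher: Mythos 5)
Your reduction to the classical fact---for a fixed purely $d$-dimensional cycle $C$ and a complementary-dimensional linear subspace $L$ meeting $|C|$ properly at $\0$, one has $(C\cdot L)_\0\geq\mult_\0 C$, with equality exactly when $L$ meets the tangent cone of $C$ only at $\0$---is fine, and you correctly isolate the real difficulty: $\Gamma^j_{f,\mbf z}$ and $\Lambda^j_{f,\mbf z}$ move as $\mbf z$ moves, so genericity of $L$ for a fixed cycle proves nothing (this is exactly the point of the remark following \propref{prop:multint}). The gap is that you then dispose of this difficulty by assertion. The proposed dimension count over the Grassmannian---that the locus of $W$ meeting the tangent cone at $\0$ of its own fiber $\Gamma^j_{f,W}$ in positive dimension is a proper analytic subset---is never carried out, and it is not routine: it requires controlling how the tangent cones at $\0$ of the polar varieties vary with $W$, i.e., controlling limits of tangent data in the coupled family, which is precisely the non-trivial content of the statement. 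The transversality of the generic polar variety to the flag that defines it is a theorem of Teissier (Corollaire IV.5.4.3 of \cite{teissiervp2}), proved with conormal/normal-cone techniques, and that theorem is what the paper's proof invokes; your sketch in effect restates it as a plausible-sounding intermediate step. Your fallback route---invoking ``the generic behavior of the polar and L\^e cycles established'' in \cite{lecycles} so that the tangent-cone condition is ``open, dense''---begs the question, since no such transversality statement is established there; openness and density of that condition for a cycle that depends on $\mbf z$ is exactly what must be proved.

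There is a second gap specific to the L\^e cycles. You treat $\Lambda^j_{f,\mbf z}$ on the same footing as $\Gamma^j_{f,\mbf z}$, but the L\^e cycles are not polar varieties of $f$: their components lie inside $\Sigma f$, and Teissier's theorem does not apply to them directly. The paper bridges this by Theorem 7.5 of \cite{numinvar}, which expresses the L\^e cycles in terms of the characteristic cycle of the (shifted) vanishing-cycle complex and the absolute polar varieties of the strata of a stratification; only after this rewriting can Teissier's Corollaire IV.5.4.3 be applied, stratum by stratum, to yield $\lambda^j_{f,\mbf z}=\mult_\0\Lambda^j_{f,\mbf z}$. Your proposal contains no substitute for this step, so even granting your transversality-in-family claim for the polar cycles, the L\^e-cycle half of the proposition would remain unproved.
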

\begin{proof}
The equality for relative polar numbers are proved by Teissier  in Corollaire IV.5.4.3 of \cite{teissiervp2}. (In fact, if the coordinates are generic enough, Teissier proves that $\Gamma^1_{f, \mbf z}$ is reduced, i.e., for all $C$, $m_C=1$; however, we do not need this level of genericity.)

Theorem 7.5 of \cite{numinvar} applied to the case where $\Fdot$ is the shifted complex of vanishing cycles of the constant sheaf along $f$ describes the L\^e cycles in terms of the characteristic cycle of $\Fdot$ and the absolute polar varieties of strata. Now the equality for L\^e numbers also follows Corollaire IV.5.4.3 of \cite{teissiervp2}.
\end{proof}

\smallskip

\begin{rem} The reader should realize that \propref{prop:multint} is far from obvious. It is certainly true that for a fixed cycle $C$, its multiplicity at the origin can be calculated as the intersection number in $\U$ with a generic linear subspace of complementary dimension (see \cite{fulton}, Example 12.4.5). However, $\Gamma^j_{f, \mbf z}$ and $\Lambda^j_{f, \mbf z}$ move as $\mbf z$ changes, and so the proof is highly non-trivial.
\end{rem}

\medskip

\begin{lem}\label{lem:easybound} Suppose that the L\^e numbers of $f$ at the origin with respect to coordinates $\mbf z$ are defined. Consider the relative polar curve as a cycle: $\Gamma^1_{f, \mbf z}=\sum_Cp_C\,C$, where the $C$ are the irreducible curves germs at $\0$. Suppose that $\lambda^0_{f,\mbf z}\neq 0$ or, equivalently, that $\Gamma^1_{f, \mbf z}\neq0$.

Then, 
$$
\lambda^0_{f,\mbf z}\geq \big(\mult_\0 \Gamma^1_{f, \mbf z}\big)\left(\mult_\0 \left(\frac{\partial f}{\partial z_0}\right)\right)\geq \big(\mult_\0\Gamma^1_{f, \mbf z}\big)(-1+\mult_\0 f).
$$
In particular, if $\lambda^0_{f,\mbf z}\neq 0$, then $\lambda^0_{f,\mbf z}\geq -1+\mult_\0 f$.
\end{lem}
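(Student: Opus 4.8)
The plan is to identify $\lambda^0_{f,\mbf z}$ with a single intersection number and then estimate that number componentwise. Recall from the construction of the Lê cycles in \cite{lecycles} that the bottom-level defining relation reads $\Gamma^1_{f,\mbf z}\cdot V(\partial f/\partial z_0)=\Gamma^0_{f,\mbf z}+\Lambda^0_{f,\mbf z}$, where the right-hand side splits the intersection into the components not contained in $\Sigma f$ and those contained in it. Since $\Gamma^0_{f,\mbf z}=0$, this collapses to $\Lambda^0_{f,\mbf z}=\Gamma^1_{f,\mbf z}\cdot V(\partial f/\partial z_0)$. Taking the coefficient at $\0$, and using the hypothesis that the Lê numbers are defined (so the intersection is proper and $0$-dimensional at $\0$), I would write
$$\lambda^0_{f,\mbf z}=\Big(\Gamma^1_{f,\mbf z}\cdot V\big(\tfrac{\partial f}{\partial z_0}\big)\Big)_\0=\sum_C p_C\,\Big(C\cdot V\big(\tfrac{\partial f}{\partial z_0}\big)\Big)_\0.$$
This already gives the asserted equivalence: each curve germ $C$ lies in $V(\partial f/\partial z_1,\dots,\partial f/\partial z_n)$ but is not contained in $\Sigma f$, hence is not contained in $V(\partial f/\partial z_0)$, so every intersection number in the sum is a strictly positive integer, and the sum vanishes exactly when $\Gamma^1_{f,\mbf z}=0$.

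Next I would bound each term from below. For an irreducible curve germ $C$ with a (generically one-to-one) normalization $\gamma(t)$ and any $g$ with $C\not\subset V(g)$, the intersection number $\big(C\cdot V(g)\big)_\0$ equals the order in $t$ of $g\circ\gamma$. Writing $g$ as the sum of its homogeneous pieces of degree $\geq\mult_\0 g$, and recalling that $\mult_\0 C=\min_i\operatorname{ord}_t\gamma_i$, each homogeneous piece of degree $k$ pulls back to a power series of order $\geq k\,\mult_\0 C$, so $\operatorname{ord}_t(g\circ\gamma)\geq(\mult_\0 g)(\mult_\0 C)$. Applying this with $g=\partial f/\partial z_0$ and summing against the $p_C$, together with the definition of the multiplicity of an effective cycle, yields
$$\lambda^0_{f,\mbf z}\geq\Big(\mult_\0\tfrac{\partial f}{\partial z_0}\Big)\sum_C p_C\,\mult_\0 C=\Big(\mult_\0\tfrac{\partial f}{\partial z_0}\Big)\big(\mult_\0\Gamma^1_{f,\mbf z}\big),$$
which is the first displayed inequality.

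The second inequality is then immediate, since differentiation lowers the order of vanishing by at most one, giving $\mult_\0(\partial f/\partial z_0)\geq-1+\mult_\0 f$. Finally, the \emph{in particular} statement follows because $\Gamma^1_{f,\mbf z}\neq0$ forces $\mult_\0\Gamma^1_{f,\mbf z}\geq1$ (all $p_C\geq1$ and all $\mult_\0 C\geq1$). I expect the only genuinely non-formal input to be the componentwise bound $\big(C\cdot V(g)\big)_\0\geq(\mult_\0 C)(\mult_\0 g)$ via the Puiseux parametrization; everything else is bookkeeping with the Lê-cycle definitions. The one point I would check with care is that the passage from the cycle identity to the numerical identity for $\lambda^0_{f,\mbf z}$ really does use the hypothesis that the Lê numbers are defined, as this is precisely what guarantees the intersection is $0$-dimensional at $\0$ and hence that each $g\circ\gamma$ has finite order.
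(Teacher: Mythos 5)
Your proposal is correct and follows essentially the same route as the paper: identify $\lambda^0_{f,\mbf z}$ with $\big(\Gamma^1_{f,\mbf z}\cdot V(\partial f/\partial z_0)\big)_\0$, split over the components $p_C\,C$, apply the componentwise bound $\big(C\cdot V(g)\big)_\0\geq(\mult_\0 C)(\mult_\0 g)$, and finish with $\mult_\0(\partial f/\partial z_0)\geq-1+\mult_\0 f$. The only cosmetic difference is that you prove the componentwise bound directly via the Puiseux parametrization, whereas the paper cites Corollary 12.4 of Fulton (while noting it is easy for curves and hypersurfaces), and your observation that components of $\Gamma^1_{f,\mbf z}$ cannot lie in $V(\partial f/\partial z_0)$ neatly subsumes the paper's separate treatment of the case $\partial f/\partial z_0\equiv 0$.
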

\begin{proof} First note that if $\frac{\partial f}{\partial z_0}$ were identically zero, then the relative polar curve $\Gamma^1_{f, \mbf z}$, as a set, would be empty; hence, as a cycle, would be zero. We have excluded this case.

\medskip

Now suppose that $\frac{\partial f}{\partial z_0}$ is not identically zero. Then,
$$
\lambda^0_{f,\mbf z}=\left(\Gamma^1_{f, \mbf z}\cdot V\Big(\frac{\partial f}{\partial z_0}\Big)\right)_\0= \left(\big(\sum_Cp_C\,C\big)\cdot V\Big(\frac{\partial f}{\partial z_0}\Big)\right)_\0.
$$
But it is well-known (and easy for the intersections of curves and hypersurfaces) that
$$
\left(C\cdot V\Big(\frac{\partial f}{\partial z_0}\Big)\right)_\0\geq \big(\mult_\0 C\big)\left(\mult_\0 \left(\frac{\partial f}{\partial z_0}\right)\right);
$$
see Corollary 12.4 of \cite{fulton} for the general statement. This proves the first inequality.

\medskip

The second inequality is also easy. If the coordinates are generic, then we would have
$$
\mult_\0 \frac{\partial f}{\partial z_0} = -1+\mult_\0 f.
$$
However, since we are assuming only that the L\^e numbers are defined, we know only that
$$
\mult_\0 \frac{\partial f}{\partial z_0} \geq -1+\mult_\0 f.
$$
The second inequality follows.
\end{proof}

\medskip

If we are willing to assume that the relative polar numbers of $f$ at the origin with respect to $\mbf z$ are defined, then we can generalize \lemref{lem:easybound}.  :

\begin{prop} Suppose that the L\^e and relative polar numbers at the origin are defined. Then,
$$
\lambda^j_{f, \mbf z}+\gamma^j_{f, \mbf z}\geq(-1+\mult_\0 f)\big(\mult_\0\Gamma^{j+1}_{f, \mbf z}\big).
$$
Thus, with generic coordinates, we have
 $$\lambda^j_f+\gamma^j_f\geq(-1+\mult_\0f)\gamma^{j+1}_f.$$
 
 Furthermore, if $f$ is a homogeneous polynomial, then equality holds in the above inequality for generic coordinates.
\end{prop}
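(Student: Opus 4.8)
The plan is to reduce the proposition to the curve--hypersurface intersection estimate already used in \lemref{lem:easybound}, via the relation that defines the L\^e cycles. Recall from \cite{lecycles} that, as cycles,
$$
\Gamma^{j+1}_{f, \mbf z}\cdot V\Big(\frac{\partial f}{\partial z_j}\Big)=\Gamma^{j}_{f, \mbf z}+\Lambda^{j}_{f, \mbf z}.
$$
First I would intersect both sides with $V(z_0, \dots, z_{j-1})$ and extract the part at $\0$. Since the L\^e and relative polar numbers are defined, all intersections in sight are proper, so the definitions of $\lambda^j_{f,\mbf z}$ and $\gamma^j_{f,\mbf z}$ together with associativity and commutativity of the refined intersection product on the smooth space $\U$ give
$$
\lambda^j_{f, \mbf z}+\gamma^j_{f, \mbf z}=\Big(\big(\Gamma^{j+1}_{f, \mbf z}\cdot V(z_0, \dots, z_{j-1})\big)\cdot V\Big(\frac{\partial f}{\partial z_j}\Big)\Big)_\0.
$$
I then write $D:=\Gamma^{j+1}_{f, \mbf z}\cdot V(z_0, \dots, z_{j-1})=\sum_C q_C\,C$ for the resulting $1$-dimensional cycle, a sum of irreducible curve germs.

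Next I would apply, component by component, exactly the estimate used in \lemref{lem:easybound}: for each curve $C$, $\big(C\cdot V(\partial f/\partial z_j)\big)_\0\geq(\mult_\0 C)\big(\mult_\0(\partial f/\partial z_j)\big)$ by Corollary 12.4 of \cite{fulton}, and $\mult_\0(\partial f/\partial z_j)\geq -1+\mult_\0 f$. Summing over $C$ yields
$$
\lambda^j_{f, \mbf z}+\gamma^j_{f, \mbf z}\geq(-1+\mult_\0 f)\sum_C q_C\,\mult_\0 C=(-1+\mult_\0 f)\,\mult_\0 D.
$$
To finish the first displayed inequality I would invoke the standard fact that a proper linear section never lowers the multiplicity at a point (with equality for a generic section): since $D$ is obtained by cutting the $(j+1)$-dimensional cycle $\Gamma^{j+1}_{f, \mbf z}$ with the codimension-$j$ linear space $V(z_0, \dots, z_{j-1})$, we get $\mult_\0 D\geq\mult_\0\Gamma^{j+1}_{f, \mbf z}$. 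The generic-coordinate version is then immediate from \propref{prop:multint}, which identifies $\mult_\0\Gamma^{j+1}_{f}$ with $\gamma^{j+1}_{f}$.

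For the homogeneous case I would trace through where each of the three inequalities becomes lossless for generic $\mbf z$. If $f$ is homogeneous of degree $d=\mult_\0 f$, then $\partial f/\partial z_j$ is homogeneous of degree $d-1$ and, for generic $\mbf z$, not identically zero, so $\mult_\0(\partial f/\partial z_j)=d-1$ exactly. Moreover the polar variety $\Gamma^{j+1}_{f, \mbf z}$ is a cone, hence so is its linear section $D$; thus every component $C$ of $D$ is a line through $\0$, giving $\mult_\0 C=1$ and $\big(C\cdot V(\partial f/\partial z_j)\big)_\0=d-1$ whenever $C\not\subseteq V(\partial f/\partial z_j)$, which holds because the intersection is proper. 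Finally, for generic $\mbf z$ the section $V(z_0, \dots, z_{j-1})$ preserves multiplicity, so $\mult_\0 D=\mult_\0\Gamma^{j+1}_{f, \mbf z}=\gamma^{j+1}_f$. All three estimates are then equalities and the chain collapses to $\lambda^j_f+\gamma^j_f=(-1+\mult_\0 f)\gamma^{j+1}_f$.

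The main obstacle I anticipate is not any single step but keeping the intersection-theoretic bookkeeping honest: justifying the reassociation of the triple intersection, which requires the intermediate cycles to meet properly (guaranteed by the hypothesis that the numbers are defined), and pinning down the multiplicity comparison $\mult_\0 D\geq\mult_\0\Gamma^{j+1}_{f,\mbf z}$ in the correct direction. The homogeneous equality then hinges on the easy but essential observation that polar varieties of homogeneous polynomials are cones, so that $D$ degenerates to a union of lines and the curve--hypersurface estimate becomes sharp.
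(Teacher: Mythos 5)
Your proof is correct, and for the core inequality it runs on the same fuel as the paper's: the defining relation $\Lambda^j_{f,\mbf z}+\Gamma^j_{f,\mbf z}=\Gamma^{j+1}_{f,\mbf z}\cdot V(\partial f/\partial z_j)$ together with Corollary 12.4 of \cite{fulton}. The difference is organizational. The paper applies Fulton's bound once to the full triple intersection, $\big(\Gamma^{j+1}_{f,\mbf z}\cdot V(\partial f/\partial z_j)\cdot V(z_0,\dots,z_{j-1})\big)_\0\geq \big(\mult_\0 V(\partial f/\partial z_j)\big)\big(\mult_\0\Gamma^{j+1}_{f,\mbf z}\big)$, where the linear space contributes a factor of $1$, so no separate comparison of multiplicities is ever needed. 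You instead reassociate, slice first to get the curve cycle $D$, apply the curve--hypersurface case of the same corollary componentwise, and then must invoke the additional fact that a proper linear section does not lower multiplicity, $\mult_\0 D\geq\mult_\0\Gamma^{j+1}_{f,\mbf z}$; that fact is true, but its proof is essentially the paper's one-shot argument (compute $\mult_\0 D$ as an intersection with a generic complementary linear space and apply Corollary 12.4 again), so your route is longer without being more elementary. Also be careful with your phrase ``equality for a generic section'': since $\Gamma^{j+1}_{f,\mbf z}$ moves with $\mbf z$, this is exactly the non-trivial point flagged in the remark following \propref{prop:multint}; your appeal to \propref{prop:multint}, combined with the sandwich $\gamma^{j+1}_f\geq\mult_\0 D\geq\mult_\0\Gamma^{j+1}_f$, does repair it, but that should be said explicitly. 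Where you genuinely diverge is the homogeneous case: the paper simply cites Theorem 1.72 of \cite{lesurvey}, whereas you give a direct geometric argument --- $\Gamma^{j+1}_{f,\mbf z}$ is a cone cycle (it is cut out by homogeneous equations, with components inside $\Sigma f$ discarded), hence $D$ is a sum of lines through $\0$ and every estimate in the chain collapses to an equality. That is a nice self-contained alternative which makes the mechanism behind the equality visible, rather than hiding it in a citation.
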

\begin{proof} This is completely straightforward.

By definition,
$$
\Lambda^j_{f, \mbf z}+\Gamma^j_{f, \mbf z}=\Gamma^{j+1}_{f, \mbf z}\cdot V\left(\frac{\partial f}{\partial z_j}\right).
$$
Thus,
$$
\lambda^j_{f, \mbf z}+\gamma^j_{f, \mbf z}=\Big(\Gamma^{j+1}_{f, \mbf z}\cdot V\left(\frac{\partial f}{\partial z_j}\right)\cdot V(z_0, \dots, z_{j-1})\Big)_\0.
$$
But then by Corollary 12.4 of \cite{fulton},
$$
\lambda^j_{f, \mbf z}+\gamma^j_{f, \mbf z}\geq\left(\mult_\0 V\Big(\frac{\partial f}{\partial z_j}\Big)\right)\big(\mult_\0\Gamma^{j+1}_{f, \mbf z}\big), 
$$
which is greater than or equal to $(-1+\mult_\0 f)\big(\mult_\0\Gamma^{j+1}_{f, \mbf z}\big)$.

\medskip

The claim involving equality for homogeneous polynomials follows from Theorem 1.72 of \cite{lesurvey}.
\end{proof}

\medskip

\section{L\^e-Iomdine Formulas with Inequalities}\label{sec:leiomdine}

In this section, we will alter our statement of Theorem 4.5 of \cite{lecycles}, the L\^e-Iomdine formulas,  to include an inequality statement; this essentially follows immediately form the proof in \cite{lecycles}, with just a slight modification. However, we also improve the needed bound on the exponent.

\medskip

We need to discuss briefly our use of not-so-generic for the L\^e numbers in some results; in the next theorem, we let
$\mbf z = (z_0, \dots, z_n)$ be a linear choice of coordinates such that the L\^e numbers of $f$ at the origin are defined. This is a fairly weak condition on $\mbf z$ which just means that all intersections which are involved are proper. It is very convenient in examples to {\bf not} require more genericity of $\mbf z$. 

In addition, the L\^e numbers at the origin being defined implies that each of the relative polar varieties/cycles $\Gamma^j_{f, \mbf z}$ has the correct dimension, i.e., is purely $j$-dimensional. Note that ``purely'' allows for the vacuous case where $\Gamma^j_{f, \mbf z}$ has no irreducible components, that is, is empty/zero. 

However, it is important to realize that the L\^e numbers being defined does {\bf not} imply that the relative polar numbers are defined. Thus, in Item 5 of the following theorem, we have the added hypothesis that $\gamma^1_{f, \mbf z}$ is defined, i.e., that $\dim_\0\big(\Gamma^1_{f, \mbf z}\cap V(z_0))\leq 0$ (where $<0$ would mean $\0$ is not in $\Gamma^1_{f, \mbf z}$). It is worth noting $\gamma^1_{f, \mbf z}(\0)$ being defined  implies that, near $\0\in\Sigma f$, $\Sigma\big(f_{|_{V(z_0)}}\big)=\Sigma f\cap V(z_0)$; see Remark 1.10 in \cite{lecycles}.

\bigskip

Now we need a definition of {\it polar ratios}; we follow \cite{masseysiersma} and\cite{siersmaisoline} (see also \cite{lecycles}, Definition 4.1).

\medskip

\begin{defn}\label{def:mpr} Suppose that  $\Gamma^1_{f, \mbf z}$ is $1$-dimensional at the origin.  

\begin{itemize}

\item Let $C$ be an 
irreducible component of $\Gamma^1_{f, \mbf z}$ (with its reduced structure). Then $C \cap V(z_0)$ is $0$-dimensional at
the origin if and only if $C\cap V(f)$  is $0$-dimensional at the origin, and in this case
the {\bf polar ratio}\index{polar ratio} of $C$ (for $f$ at $\mbf 0$ with respect to $\mbf z$) is $$\frac{(C \cdot V(f))_{{}_\mbf
0}}{(C \cdot V(z_0))_{{}_\mbf 0}} = \frac{\left(C \cdot V\left(\frac{\partial f}{\partial z_0}\right)\right)_{{}_\mbf 0}
+(C \cdot V(z_0))_{{}_\mbf 0}}{(C \cdot V(z_0))_{{}_\mbf 0}} = \frac{\left(C \cdot V\left(\frac{\partial f}{\partial
z_0}\right)\right)_{{}_\mbf 0}}{(C \cdot V(z_0))_{{}_\mbf 0}} + 1 .$$

\medskip

\item Let $C$ be an 
irreducible component of $\Gamma^1_{f, \mbf z}$ such that  $C \cap V(z_0)$ is $1$-dimensional at
the origin, i.e., is contained in $V(z_0)$ near the origin. Then we define the polar ratio of $C$ to be $1$.

\end{itemize}

\medskip

\noindent We will be interested in the {\bf maximum polar ratio} over all of the irreducible components $C$  of $\Gamma^1_{f, \mbf z}$ at the origin; we denote this maximum by $\mpr\{f, \mbf z\}$.

\smallskip

If the set $\Gamma^1_{f, \mbf z}$ is empty at the origin, i.e., if the cycle $\Gamma^1_{f, \mbf z}=0$ at the origin, then we define $\mpr\{f, \mbf z\}$ to be $1$.
\end{defn}

\medskip

\begin{lem}\label{lem:mprmult} Suppose that  the L\^e numbers $\lambda^j_{f, \mbf z}$ are defined. Suppose also that $\mbf z$ is generic enough so that  $\gamma^1_{f, \mbf z}$ is defined and $\gamma^1_{f, \mbf z}=\mult_\0 \Gamma^1_{f, \mbf z}$. Finally, suppose that  $\gamma^1_{f, \mbf z}\neq0$. Then,
$$ \mpr\{f, \mbf z\}\geq \mult_\0 f.$$
\end{lem}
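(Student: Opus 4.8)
The plan is to bound each individual polar ratio from below by $\mult_\0 f$, so that the maximum is automatically at least $\mult_\0 f$. Write $\Gamma^1_{f, \mbf z} = \sum_C p_C\, C$ as the sum over its irreducible components (with reduced structure and $p_C > 0$). Since $\gamma^1_{f, \mbf z}$ is assumed to be defined, we have $\dim_\0\big(\Gamma^1_{f, \mbf z}\cap V(z_0)\big)\leq 0$, so no component $C$ lies in $V(z_0)$ near $\0$; hence $C\cap V(z_0)$ is $0$-dimensional for every $C$, and the genuine (first) case of the polar ratio in \defref{def:mpr} applies to each component, with no component falling into the degenerate case where the ratio is declared to be $1$.

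The two facts I would invoke are both instances of Corollary 12.4 of \cite{fulton}: for each component $C$,
$$
\big(C \cdot V(z_0)\big)_\0 \geq \mult_\0 C
\qquad\text{and}\qquad
\big(C \cdot V(f)\big)_\0 \geq \big(\mult_\0 C\big)\big(\mult_\0 f\big).
$$
The key step is to upgrade the first of these to an equality for every component, and this is exactly where the hypothesis $\gamma^1_{f, \mbf z} = \mult_\0 \Gamma^1_{f, \mbf z}$ enters. Expanding,
$$
\gamma^1_{f, \mbf z} = \big(\Gamma^1_{f, \mbf z}\cdot V(z_0)\big)_\0 = \sum_C p_C\big(C\cdot V(z_0)\big)_\0
\quad\text{and}\quad
\mult_\0 \Gamma^1_{f, \mbf z} = \sum_C p_C\,\mult_\0 C,
$$
so the hypothesis says $\sum_C p_C\big[\big(C\cdot V(z_0)\big)_\0 - \mult_\0 C\big] = 0$. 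Each summand is the product of the positive integer $p_C$ with a nonnegative quantity, so the whole sum can vanish only if every bracket vanishes; thus $\big(C\cdot V(z_0)\big)_\0 = \mult_\0 C$ for every $C$.

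With this in hand the conclusion is immediate. For each component $C$,
$$
\frac{\big(C \cdot V(f)\big)_\0}{\big(C \cdot V(z_0)\big)_\0}
\;\geq\; \frac{\big(\mult_\0 C\big)\big(\mult_\0 f\big)}{\mult_\0 C} \;=\; \mult_\0 f,
$$
where the numerator is bounded below by the second Fulton inequality and the denominator is replaced using the componentwise equality just proved (and $\mult_\0 C>0$ since $C$ is an honest curve germ). Because $\gamma^1_{f, \mbf z}\neq 0$ forces at least one component to be present, taking the maximum over all components gives $\mpr\{f, \mbf z\}\geq \mult_\0 f$. I do not anticipate a real obstacle; the only delicate point is the forcing of componentwise equality from the single global equality, which rests entirely on the positivity of the coefficients $p_C$ together with the termwise inequality $\big(C\cdot V(z_0)\big)_\0\geq\mult_\0 C$.
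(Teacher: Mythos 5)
Your proof is correct and follows essentially the same route as the paper's: both rest on Fulton's Corollary 12.4 together with the componentwise equality $\left(C\cdot V(z_0)\right)_\0=\mult_\0 C$ that the hypothesis $\gamma^1_{f,\mbf z}=\mult_\0\Gamma^1_{f,\mbf z}$ forces on every component. The only cosmetic differences are that you apply Fulton's bound to $V(f)$ directly, whereas the paper applies it to $V\left(\frac{\partial f}{\partial z_0}\right)$ and then uses $\mult_\0\left(\frac{\partial f}{\partial z_0}\right)+1\geq\mult_\0 f$, and that you spell out the positivity argument deducing the componentwise equality from the global one, a step the paper uses implicitly.
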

\begin{proof} Let $C$ be one of the irreducible (reduced) components of $\Gamma^1_{f, \mbf z}$ for which the polar ratio is maximal. Then, by Corollary 12.4 of \cite{fulton}, 
$$\left(C \cdot V\left(\frac{\partial f}{\partial z_0}\right)\right)_{{}_\mbf 0}\geq \big(\mult_\0 C\big)\left(\mult_\0 \left(\frac{\partial f}{\partial z_0}\right)\right) = \big((C\cdot V(z_0))_\0\big)\left(\mult_\0 \left(\frac{\partial f}{\partial z_0}\right)\right),
$$
and so we have
$$
\frac{(C \cdot V\left(\frac{\partial f}{\partial z_0}\right))_{{}_\mbf 0}}{(C \cdot V(z_0))_{{}_\mbf 0}} + 1 \geq \left(\mult_\0 \left(\frac{\partial f}{\partial z_0}\right)\right)+1\geq \mult_\0 f.
$$
\end{proof}

We need the following trivial lemma. Even though the proof is via elementary school algebra, the inequality is important for us and is not obvious, so we give the short argument.

\begin{lem}\label{lem:dumbalg1} Suppose that $a$ and $b$ are real numbers, that  $a\geq 1$ and $b\geq 1$. Then,
$$
a\geq b\hskip 0.1in\textnormal{ if and only if }\hskip 0.1in a-b+1\geq \frac{a}{b}.
$$
\end{lem}
\begin{proof}
If $b=1$, then the conclusion is that $a\geq 1$ if and only if $a\geq a$. As we are assuming that $a\geq 1$, this statement is true.

Now suppose that $b>1$. Then,
$$
a\geq b \hskip 0.07in \Leftrightarrow  \hskip 0.07in  a(b-1)\geq b(b-1)  \hskip 0.07in \Leftrightarrow  \hskip 0.07in ab-a\geq b^2-b \hskip 0.07in \Leftrightarrow  \hskip 0.07in ab-b^2+b\geq a \hskip 0.07in \Leftrightarrow  \hskip 0.07in a-b+1\geq \frac{a}{b}.
$$
\end{proof}

\medskip

\begin{prop}\label{prop:newmpr} Suppose that  the L\^e numbers $\lambda^j_{f, \mbf z}$ are defined.

\begin{enumerate}

\item We have the inequality 
$$
\lambda^0_{f, \mbf z}+1\geq  \mpr\{f, \mbf z\}.
$$

\medskip

 \item  Suppose also that $\mbf z$ is generic enough so that   $\gamma^1_{f, \mbf z}$ is defined and $\gamma^1_{f, \mbf z}=\mult_\0 \Gamma^1_{f, \mbf z}$. Use $\check{\mbf z}:=(z_1, \dots, z_n)$ as coordinates on $V(z_0)$. Then,
$$\lambda^0_{f, \mbf z}+\lambda^1_{f, \mbf z}-\lambda^0_{f_{|_{V(z_0)}}, \check{\mbf z}}+2=\lambda^0_{f, \mbf z}-\gamma^1_{f, \mbf z}+2 \geq \mpr\{f, \mbf z\}.$$
\end{enumerate}
\end{prop}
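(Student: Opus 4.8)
The plan is to prove the two items separately. Item (1) is a direct estimate from the formula $\lambda^0_{f, \mbf z}=\big(\Gamma^1_{f, \mbf z}\cdot V(\frac{\partial f}{\partial z_0})\big)_\0$ recorded in \lemref{lem:easybound}. Writing $\Gamma^1_{f, \mbf z}=\sum_C p_C\,C$, I have $\lambda^0_{f, \mbf z}=\sum_C p_C\big(C\cdot V(\frac{\partial f}{\partial z_0})\big)_\0$ with every $p_C\geq 1$ and every summand nonnegative. Let $C_0$ realize $\mpr\{f, \mbf z\}$. If $\mpr\{f, \mbf z\}=1$ (in particular if $\Gamma^1_{f, \mbf z}$ is empty or $C_0\subseteq V(z_0)$) the inequality is immediate from $\lambda^0_{f, \mbf z}\geq 0$. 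Otherwise $C_0\cap V(z_0)$ is $0$-dimensional, so $(C_0\cdot V(z_0))_\0\geq 1$ and
$$\mpr\{f, \mbf z\}-1=\frac{\big(C_0\cdot V(\frac{\partial f}{\partial z_0})\big)_\0}{(C_0\cdot V(z_0))_\0}\leq \big(C_0\cdot V(\frac{\partial f}{\partial z_0})\big)_\0\leq \lambda^0_{f, \mbf z},$$
which is item (1). Note this uses only that the L\^e numbers are defined.

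For item (2) I would first establish the asserted equality, which after cancelling $\lambda^0_{f, \mbf z}+2$ from both sides is exactly the slicing identity $\lambda^0_{f_{|_{V(z_0)}}, \check{\mbf z}}=\lambda^1_{f, \mbf z}+\gamma^1_{f, \mbf z}$. I would obtain this at the level of cycles: intersecting the defining identity $\Lambda^1_{f, \mbf z}+\Gamma^1_{f, \mbf z}=\Gamma^2_{f, \mbf z}\cdot V(\frac{\partial f}{\partial z_1})$ with $V(z_0)$ gives $\lambda^1_{f, \mbf z}+\gamma^1_{f, \mbf z}=\big(\Gamma^2_{f, \mbf z}\cdot V(\frac{\partial f}{\partial z_1})\cdot V(z_0)\big)_\0$; then I would identify the cycle $\Gamma^2_{f, \mbf z}\cdot V(z_0)$ inside $V(z_0)$ with the relative polar curve $\Gamma^1_{f_{|_{V(z_0)}}, \check{\mbf z}}$ of the slice (noting $\frac{\partial}{\partial z_1}$ of $f_{|_{V(z_0)}}$ is the restriction of $\frac{\partial f}{\partial z_1}$), so the right-hand side becomes $\big(\Gamma^1_{f_{|_{V(z_0)}}, \check{\mbf z}}\cdot V(\frac{\partial f}{\partial z_1})\big)_\0=\lambda^0_{f_{|_{V(z_0)}}, \check{\mbf z}}$. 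This commutation of the polar variety with a generic hyperplane section is the genuinely delicate step, and I expect it to be the main obstacle; it is exactly where the hypotheses are used, since $\gamma^1_{f, \mbf z}$ being defined guarantees $\Sigma(f_{|_{V(z_0)}})=\Sigma f\cap V(z_0)$ near $\0$ (as noted in the excerpt), and $\gamma^1_{f, \mbf z}=\mult_\0\Gamma^1_{f, \mbf z}$ fixes the relevant genericity of $z_0$. I would either cite the corresponding slicing result from \cite{lecycles} or verify the commutation directly.

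It then remains to prove $\lambda^0_{f, \mbf z}-\gamma^1_{f, \mbf z}+2\geq\mpr\{f, \mbf z\}$. If $\gamma^1_{f, \mbf z}=0$ this is trivial, so assume $\gamma^1_{f, \mbf z}\neq 0$. Because $\gamma^1_{f, \mbf z}$ is defined, no component of $\Gamma^1_{f, \mbf z}$ lies in $V(z_0)$, and because $\gamma^1_{f, \mbf z}=\mult_\0\Gamma^1_{f, \mbf z}$ we have $(C\cdot V(z_0))_\0=\mult_\0 C$ for every component $C$; in particular every component is of the first type in \defref{def:mpr}. By Corollary 12.4 of \cite{fulton} together with $\mult_\0(\frac{\partial f}{\partial z_0})\geq 1$, each component satisfies $\big(C\cdot V(\frac{\partial f}{\partial z_0})\big)_\0\geq\mult_\0 C=(C\cdot V(z_0))_\0$. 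Summing $\lambda^0_{f, \mbf z}-\gamma^1_{f, \mbf z}=\sum_C p_C\big[(C\cdot V(\frac{\partial f}{\partial z_0}))_\0-(C\cdot V(z_0))_\0\big]$ and discarding all terms except that of a component $C_0$ realizing $\mpr\{f, \mbf z\}$, I get $\lambda^0_{f, \mbf z}-\gamma^1_{f, \mbf z}\geq a-b$, where $a:=\big(C_0\cdot V(\frac{\partial f}{\partial z_0})\big)_\0$ and $b:=(C_0\cdot V(z_0))_\0$. The same estimate gives $a\geq b\geq 1$, so \lemref{lem:dumbalg1} yields $a-b+1\geq a/b$; adding $1$ and using $\mpr\{f, \mbf z\}=a/b+1$ gives $\lambda^0_{f, \mbf z}-\gamma^1_{f, \mbf z}+2\geq (a-b+1)+1\geq a/b+1=\mpr\{f, \mbf z\}$, completing item (2).
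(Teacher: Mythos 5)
Your proposal is correct and follows essentially the same route as the paper: item (1) by direct estimate from $\lambda^0_{f,\mbf z}=\big(\Gamma^1_{f,\mbf z}\cdot V(\partial f/\partial z_0)\big)_\0$, and item (2) by reducing the equality to the slicing result (Proposition 1.21 of \cite{lecycles}, which the paper simply cites, as you should too rather than re-deriving it) and then applying the Fulton-type termwise estimate together with \lemref{lem:dumbalg1} to a component realizing the maximal polar ratio, discarding the remaining nonnegative terms. The paper's proof is the same argument, stated slightly more tersely (your ``discarding terms'' step is its \emph{a fortiori}).
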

\begin{proof} We work in a neighborhood of the origin.

Item 1 is trivial given the definitions of $\lambda^0_{f, \mbf z}$ and the maximum polar multiplicity.

\medskip

\noindent Proof of Item 2:

\smallskip

First, note that $\lambda^0_{f_{|_{V(z_0)}}, \check{\mbf z}}=\gamma^1_{f, \mbf z}+\lambda^1_{f, \mbf z}$ by Proposition 1.21 of \cite{lecycles}; this explains the equality in Item (2).

\smallskip

Now if $\Gamma^1_{f, \mbf z}=0$, the inequalities become $2\geq 1$.

\medskip

Suppose then that $\Gamma^1_{f, \mbf z}\neq0$, and that $\gamma^1_{f, \mbf z}=\mult_\0 \Gamma^1_{f, \mbf z}$.

\smallskip

Let $C$ be one of the irreducible (reduced) components of $\Gamma^1_{f, \mbf z}$ for which the polar ratio is maximal. As $\left(C\cdot V(z_0)\right)_\0=\mult_\0 C$, we have that $\left(C\cdot V\left(\frac{\partial f}{\partial z_0}\right)\right)_\0\geq \left(C\cdot V(z_0)\right)_\0\geq 1$ and, hence, we may apply \lemref{lem:dumbalg1} to conclude that 
$$
\left(C\cdot V\left(\frac{\partial f}{\partial z_0}\right)\right)_\0- \left(C\cdot V(z_0)\right)_\0 + 2\geq \mpr\{f, \mbf z\}.
$$
Thus, {\it a fortiori}, $\lambda^0_f-\gamma^1_f+2 \geq \mpr\{f, \mbf z\}$.
\end{proof}

\medskip

Our proofs of Lemma 4.3 and Theorem 4.5 from \cite{lecycles}  prove the following result with very little modification; we simply did not have a use for the inequalities before now. However, for clarity, we will explain the mild change required, including why we need for $m$ to be large only to replace the inequality in Item 4 with equality in Item 5.

\medskip

\begin{thm}\label{thm:leiom} Let $m \geqslant 2$ and let $f:(\mathcal U, \mbf 0) \rightarrow (\Bbb C, 0)$ be an analytic function, and let $s:=\dim_\0\Sigma f$. Let
$\mbf z = (z_0, \dots, z_n)$ be a linear choice of coordinates such that the L\^e numbers of $f$ at the origin are defined.  Let $a$ be a non-zero complex number, and use the coordinates $\tilde{\mbf z} = (z_1, \dots, z_n, z_0)$ for $f+ az_0^m$. Use $\check{\mbf z}:=(z_1, \dots, z_n)$ as coordinates on $V(z_0)$

\vskip .1in

Then, for all but a finite number of complex $a$, the following hold.

\medskip

\begin{enumerate}
\item $\Sigma(f +
az_0^m) = \Sigma f \cap V(z_0)$ as germs of sets at $\mbf 0$,  

\medskip

\item If $s\geq 1$, then ${\operatorname{dim}}_\mbf 0\Sigma(f + az_0^m) = s -1$.

\medskip

\item The L\^e numbers of $f + az_0^m$ at the origin, with respect to $\tilde{\mbf z}$,  exist.

\medskip

\item For $1
\leqslant j \leqslant s-1$, $$\lambda^j_{f+ az_0^m, \tilde{\mbf z}} = (m-1)\lambda^{j+1}_{f, \mbf z},$$  
and 
$$\lambda^0_{f+az_0^m,
\tilde{\mbf z}} \leq \lambda^0_{f, \mbf z} + (m-1)\lambda^1_{f, \mbf z};$$

\medskip

\item Furthermore, if $m\geq \mpr\{f, \mbf z\}$ (and $m\geq 2$), then equality holds in the inequality above, i.e., 
$$\lambda^0_{f+az_0^m,
\tilde{\mbf z}} = \lambda^0_{f, \mbf z} + (m-1)\lambda^1_{f, \mbf z}.$$

\smallskip

\noindent In particular, the equality holds if:

\smallskip
\begin{itemize}
\item  $\lambda^0_{f, \mbf z}=0$ and $m\geq 2$; or
\item $\lambda^0_{f, \mbf z}\neq0$ and $m\geq 1+\lambda^0_{f, \mbf z}$; or
\item $\gamma^1_{f, \mbf z}$ is defined, $\gamma^1_{f, \mbf z}=\mult_\0 \Gamma^1_{f, \mbf z}$, and $m\geq \lambda^0_{f, \mbf z}+\lambda^1_{f, \mbf z}-\lambda^0_{f_{|_{V(z_0)}}, \check{\mbf z}}+2$.
\end{itemize}

\bigskip

\item If $\gamma^1_{f, \mbf z}$ is defined, then we have also $\lambda^0_{f+az_0^m,\tilde{\mbf z}} \leq (m-1)\lambda^0_{f_{|_{V(z_0)}}, \check{\mbf z}}$.
\end{enumerate}
\end{thm}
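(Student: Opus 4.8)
The plan is to first rewrite the right-hand side and then bound $\lambda^0_{f+az_0^m,\tilde{\mbf z}}$ using the same cycle computation that produces Items 4 and 5. Since $az_0^m$ vanishes on $V(z_0)$ we have $(f+az_0^m)_{|_{V(z_0)}}=f_{|_{V(z_0)}}$, and by Proposition 1.21 of \cite{lecycles} (already invoked in the proof of \propref{prop:newmpr}) we have $\lambda^0_{f_{|_{V(z_0)}},\check{\mbf z}}=\gamma^1_{f,\mbf z}+\lambda^1_{f,\mbf z}$. Thus the target is equivalent to $\lambda^0_{f+az_0^m,\tilde{\mbf z}}\leq (m-1)\gamma^1_{f,\mbf z}+(m-1)\lambda^1_{f,\mbf z}$, and I would show that these two summands dominate the two natural pieces of $\lambda^0_{f+az_0^m,\tilde{\mbf z}}$. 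Note that Item 4 alone is \emph{not} enough here: when $2\leq m<\mpr\{f,\mbf z\}$ one can have $\lambda^0_{f,\mbf z}>(m-1)\gamma^1_{f,\mbf z}$, so a finer, component-by-component estimate is required.

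The key input is the cycle-level identity underlying the L\^e--Iomdine formula. Writing $\tilde z_0=z_1$, we have $\lambda^0_{f+az_0^m,\tilde{\mbf z}}=\big(\Gamma^1_{f+az_0^m,\tilde{\mbf z}}\cdot V(\frac{\partial f}{\partial z_1})\big)_\0$. As in the proof of Theorem 4.5 of \cite{lecycles}, the components of $\Gamma^1_{f+az_0^m,\tilde{\mbf z}}$ are exactly the components, off $\Sigma(f+az_0^m)$, of $\Gamma^2_{f,\mbf z}\cdot V(\frac{\partial f}{\partial z_0}+amz_0^{m-1})$, with matching multiplicities; the remaining components of the latter cycle are effective and supported on $\Sigma(f+az_0^m)=\Sigma f\cap V(z_0)$. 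Discarding those effective components and then applying the defining L\^e-cycle relation $\Gamma^2_{f,\mbf z}\cdot V(\frac{\partial f}{\partial z_1})=\Gamma^1_{f,\mbf z}+\Lambda^1_{f,\mbf z}$, I obtain
$$\lambda^0_{f+az_0^m,\tilde{\mbf z}}\ \leq\ \Big(\Gamma^1_{f,\mbf z}\cdot V\Big(\frac{\partial f}{\partial z_0}+amz_0^{m-1}\Big)\Big)_\0+\Big(\Lambda^1_{f,\mbf z}\cdot V\Big(\frac{\partial f}{\partial z_0}+amz_0^{m-1}\Big)\Big)_\0.$$

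It then remains to estimate the two terms. On $|\Lambda^1_{f,\mbf z}|\subseteq\Sigma f$ the partial $\frac{\partial f}{\partial z_0}$ vanishes identically, so the restriction of $\frac{\partial f}{\partial z_0}+amz_0^{m-1}$ there is $amz_0^{m-1}$; since $am\neq0$ and $\lambda^1_{f,\mbf z}=(\Lambda^1_{f,\mbf z}\cdot V(z_0))_\0$ is defined, the second term equals $(m-1)(\Lambda^1_{f,\mbf z}\cdot V(z_0))_\0=(m-1)\lambda^1_{f,\mbf z}$. For the first term, write $\Gamma^1_{f,\mbf z}=\sum_C p_C\,C$ and, on each reduced branch $C$, compare $\alpha_C:=(C\cdot V(\frac{\partial f}{\partial z_0}))_\0$ with $(m-1)\beta_C$, where $\beta_C:=(C\cdot V(z_0))_\0$ is finite because $\gamma^1_{f,\mbf z}$ is defined. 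The intersection number $(C\cdot V(\frac{\partial f}{\partial z_0}+amz_0^{m-1}))_\0$ equals $\min\{\alpha_C,(m-1)\beta_C\}$ except possibly in the borderline case $\alpha_C=(m-1)\beta_C$, where cancellation of leading terms occurs for only one value of $a$; hence for all but finitely many $a$ it is at most $(m-1)\beta_C$. Summing against $p_C$ bounds the first term by $(m-1)\sum_C p_C\beta_C=(m-1)(\Gamma^1_{f,\mbf z}\cdot V(z_0))_\0=(m-1)\gamma^1_{f,\mbf z}$, and combining the two estimates yields the claim. (Bounding this same first term instead by $\sum_C p_C\alpha_C=\lambda^0_{f,\mbf z}$ recovers Item 4, so both inequalities flow from one computation.)

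The main obstacle is the cycle bookkeeping in the second paragraph: justifying that $\Gamma^1_{f+az_0^m,\tilde{\mbf z}}$ agrees, with multiplicities, with the off-$\Sigma$ part of $\Gamma^2_{f,\mbf z}\cdot V(\frac{\partial f}{\partial z_0}+amz_0^{m-1})$, and that all intersections involved are proper at $\0$, so that the cycle inequality is preserved upon intersecting with $V(\frac{\partial f}{\partial z_1})$. This is precisely the content of the proof of Theorem 4.5 of \cite{lecycles}, so the only genuinely new work is to extract the inequality, rather than the equality, from that argument; the order comparison in the borderline case is the one additional, though elementary, point.
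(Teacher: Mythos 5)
Your route is, in substance, the paper's own: defer the cycle-theoretic bookkeeping to the proof of Theorem 4.5 of \cite{lecycles}; reduce to the relation $\lambda^0_{f+az_0^m,\tilde{\mbf z}}=\big(\Gamma^1_{f,\mbf z}\cdot V\big(\tfrac{\partial f}{\partial z_0}+maz_0^{m-1}\big)\big)_\0+(m-1)\lambda^1_{f,\mbf z}$; evaluate the first term as $\sum_C p_C\min\{\alpha_C,(m-1)\beta_C\}$ for all but finitely many $a$; and bound the minimum by $\alpha_C$ (Item 4) or by $(m-1)\beta_C$ together with Proposition 1.21 of \cite{lecycles} (Item 6). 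Your ``cancellation occurs for only one value of $a$ per component'' observation is exactly the paper's one genuinely new ingredient, namely the re-proof of Item (i) of Lemma 4.3 of \cite{lecycles} for every $m\geq 2$ (containment of $C$ in $V\big(\tfrac{\partial f}{\partial z_0}+m\hat az_0^{m-1}\big)$ and in $V\big(\tfrac{\partial f}{\partial z_0}+m\check az_0^{m-1}\big)$ for $\hat a\neq\check a$ would force $C\subseteq V(z_0)\cap V\big(\tfrac{\partial f}{\partial z_0}\big)$, contradicting that $\lambda^0_{f,\mbf z}$ is defined).

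The step that fails as written is the derivation of the master relation as an \emph{inequality} by ``discarding'' an effective cycle $E$ supported on $\Sigma(f+az_0^m)=\Sigma f\cap V(z_0)$ and then intersecting with $V\big(\tfrac{\partial f}{\partial z_1}\big)$. Every component of $E$ lies in $\Sigma f$, and $\Sigma f\subseteq V\big(\tfrac{\partial f}{\partial z_1}\big)$; so if $E\neq 0$, the cycle $\Gamma^2_{f,\mbf z}\cdot V\big(\tfrac{\partial f}{\partial z_0}+maz_0^{m-1}\big)$ does \emph{not} meet $V\big(\tfrac{\partial f}{\partial z_1}\big)$ properly at $\0$, the intermediate intersection number you compare against is undefined, and citing \cite{lecycles} for ``properness of all intersections involved'' cannot repair this, since that properness is false in the scenario you allow. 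What actually saves the argument is that $E=0$ for all but finitely many $a$: the support of $E$ lies in $|\Gamma^2_{f,\mbf z}|\cap V\big(\tfrac{\partial f}{\partial z_1}\big)\cap V\big(\tfrac{\partial f}{\partial z_0}+maz_0^{m-1}\big)$, whose support is $\big(|\Gamma^1_{f,\mbf z}|\cup|\Lambda^1_{f,\mbf z}|\big)\cap V\big(\tfrac{\partial f}{\partial z_0}+maz_0^{m-1}\big)$; the $\Gamma^1$ part is $0$-dimensional by your own genericity argument, and on $\Lambda^1_{f,\mbf z}\subseteq\Sigma f$ the function $\tfrac{\partial f}{\partial z_0}+maz_0^{m-1}$ cuts out the same set as $z_0$, which is $0$-dimensional because $\lambda^1_{f,\mbf z}$ is defined; since $E$ is purely $1$-dimensional, it vanishes near $\0$. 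So the master relation is an \emph{equality} --- which is moreover what Item 5 requires: your $\leq$ alone can never produce the asserted equality when $m\geq\mpr\{f,\mbf z\}$. The inequality in the theorem genuinely enters only at the $\min$-comparison stage, exactly where the paper places it. Two smaller points: in Items 4 and 5, $\gamma^1_{f,\mbf z}$ is \emph{not} assumed defined, so components $C\subseteq V(z_0)$ of $\Gamma^1_{f,\mbf z}$ are allowed, and for those $\beta_C$ is infinite and the intersection number is simply $\alpha_C$ (harmless for the bound by $\lambda^0_{f,\mbf z}$, but your $\min$ formula tacitly assumes $\beta_C<\infty$); and Items 1--3 still need the paper's observation that once Lemma 4.3(i) holds for all $m\geq 2$, the remaining items of Lemma 4.3 and the proof of Theorem 4.5 of \cite{lecycles} go through unchanged.
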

\begin{proof}  The proof of this is identical to that of Theorem 4.5 of \cite{lecycles} once we prove that Items (i), (iii) and (iv) of Lemma 4.3 of \cite{lecycles} hold and that Item (ii) of that lemma holds with the equality replaced by inequality {\bf without assuming anything about the size of $m$ other than $m\geq 2$}. In fact, the proofs of Items (iii) and (iv) from the lemma do not change once we prove Item (i).

\medskip 

\noindent Proof of Lemma 4.3, Item (i) from \cite{lecycles} for $m\geq 2$:

\medskip

In fact, when writing Lemma 4.3, we omitted the trivial case where $\Gamma^1_{f, \mbf z}=0$ ; Item (i) is not technically true in that case. However, $\Gamma^1_{f, \mbf z}=0$ makes all of the later consequences of Item (i) hold trivially.

So assume that $\Gamma^1_{f, \mbf z}=\sum p_C[C]\neq 0$; as the L\^e numbers are defined, this is $1$-dimensional. We want to show that, for a given $C$, for all but a finite number of $a$, $\dim_\0\left(C\cap V\left(\frac{\partial f}{\partial z_0}+maz_0^{m-1}\right)\right)=0$, i.e., near $\0$, $C\not\subseteq V\left(\frac{\partial f}{\partial z_0}+maz_0^{m-1}\right)$. Suppose $\hat a\neq \check a$ and 
$$
C\subseteq V\left(\frac{\partial f}{\partial z_0}+m\hat az_0^{m-1}\right)\cap V\left(\frac{\partial f}{\partial z_0}+m\check az_0^{m-1}\right).
$$
Then $C$ would be contained in $V(z_0)$ and so in  $V\left(\frac{\partial f}{\partial z_0}\right)$; this would contradict that $\lambda^0_{f, \mbf z}$ is defined. Therefore, for each $C$, there is at most one value of $a$ that needs to be avoided. The result follows.

\bigskip

\noindent Proof of Items (4), (5), and $(6)$ for $m\geq 2$:

\medskip

The equalities in Items (4) and (5) are proved exactly as in the proof of Theorem 4.5 of \cite{lecycles}, when combined with \propref{prop:newmpr}. It is the inequalities in (4) and (6) that we need to address.

The proof of Theorem 4.5 of \cite{lecycles} shows that
$$
\lambda^0_{f+az_0^m,
\tilde{\mbf z}} = \left(\Gamma^1_{f, \mbf z}\cdot V\left(\frac{\partial f}{\partial z_0}+maz_0^{m-1}\right)\right)_\0 + (m-1)\lambda^1_{f, \mbf z}.
$$
Letting $\Gamma^1_{f, \mbf z} =\sum_C p_C[C]$ and proceeding as in Theorem 4.5 of \cite{lecycles}, we have, for all but a finite number of $a$,
$$\left(\Gamma^1_{f, \mbf z}\cdot V\left(\frac{\partial f}{\partial z_0}+maz_0^{m-1}\right)\right)_\0=\sum_C p_C \left(C\cdot V\left(\frac{\partial f}{\partial z_0}+maz_0^{m-1}\right)\right)_\0 =
$$
$$
\sum_Cp_C\min\left\{\left(C\cdot V\left(\frac{\partial f}{\partial z_0}\right)\right)_\0,\ (m-1)\left(C\cdot V\left(z_0\right)\right)_\0\right\}.
$$
This last summation equals $\lambda^0_{f, \mbf z}$ if $m\geq \mpr\{f, \mbf z\}$; however, in general, we know that
$$
\sum_Cp_C\min\left\{\left(C\cdot V\left(\frac{\partial f}{\partial z_0}\right)\right)_\0,\ (m-1)\left(C\cdot V\left(z_0\right)\right)_\0\right\}\leq \min\{\lambda^0_{f, \mbf z}, (m-1)\gamma^1_{f, \mbf z}\}.
$$
This proves Item (5). It also proves Item (6) since $(m-1)\gamma^1_{f, \mbf z}+(m-1)\lambda^1_{f, \mbf z} = (m-1)\lambda^0_{f_{|_{V(z_0)}}, \check{\mbf z}}$; see Proposition 1.21 of \cite{lecycles}.
\end{proof}

\bigskip

\begin{rem}\label{rem:leiom} It will be important to us in the next section that \thmref{thm:leiom} applies to the case where $\dim_0\Sigma f=0$. In this case, for $j\geq 1$, $\lambda^j_{f, \mbf z}=0$, and $\lambda^0_{f, \mbf z}=\mu(f)$ the Milnor number of $f$ at the origin; if we let $m:=1+\mu(f)$, then we have that, for all but a finite number of $a$, $f+az_0^m$ has an isolated critical point at $\0$ and $\mu(f+az_0^m)=\mu(f)$.
\end{rem}

\medskip

Before we get to versions of the Minkowski inequalities in the next section, we have the following easy to obtain bound:

\begin{prop}\label{prop:funbound} Suppose that the L\^e numbers of $f$ at the origin with respect to coordinates $\mbf z$ are defined. Then,

$$
\sum_{j=0}^s (-1+\mult_\0 f)^j\lambda^j_{f, \mbf z}\geq(-1+\mult_\0 f)^{n+1},
$$
with equality holding if $f$ is a homogeneous polynomial.
\end{prop}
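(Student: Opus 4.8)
The plan is to induct on $s=\dim_\0\Sigma f$, abbreviating $b:=-1+\mult_\0 f\ge 1$, so that the claim becomes $\sum_{j=0}^s b^j\lambda^j_{f,\mbf z}\ge b^{n+1}$. The base case $s=0$ needs no new work: there $\lambda^j_{f,\mbf z}=0$ for $j\ge 1$ and $\lambda^0_{f,\mbf z}=\mu(f)$ (\remref{rem:leiom}), so the inequality is precisely $\mu(f)\ge(\mult_\0 f-1)^{n+1}$, which is the second inequality of \corref{cor:teissier} with $k=n$.

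For the inductive step I would apply the L\^e-Iomdine formula \thmref{thm:leiom} with the deliberate choice of exponent $m:=\mult_\0 f$ (which is $\ge 2$ because $\0\in\Sigma f$). This choice is what makes the bookkeeping work: the multiplier $m-1$ occurring in \thmref{thm:leiom} is then exactly $b$, and for all but finitely many $a$ one also has $\mult_\0(f+az_0^m)=\mult_\0 f$, since the degree-$m$ part of $f+az_0^m$ can vanish only if the leading form of $f$ equals $-az_0^m$, excluding at most one further value of $a$. Writing $g:=f+az_0^m$ with coordinates $\tilde{\mbf z}$, \thmref{thm:leiom} provides $\dim_\0\Sigma g=s-1$, the existence of the L\^e numbers of $g$, the equalities $\lambda^j_{g,\tilde{\mbf z}}=b\,\lambda^{j+1}_{f,\mbf z}$ for $1\le j\le s-1$, and the inequality $\lambda^0_{g,\tilde{\mbf z}}\le\lambda^0_{f,\mbf z}+b\,\lambda^1_{f,\mbf z}$. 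Since $g$ has the same multiplicity (hence the same $b$) and lives in the same $\C^{n+1}$, I then form the weighted sum for $g$ and reindex:
$$
\sum_{j=0}^{s-1} b^j\lambda^j_{g,\tilde{\mbf z}}\ \le\ \lambda^0_{f,\mbf z}+b\,\lambda^1_{f,\mbf z}+\sum_{j=1}^{s-1}b^{j+1}\lambda^{j+1}_{f,\mbf z}\ =\ \sum_{j=0}^{s}b^j\lambda^j_{f,\mbf z}.
$$
The induction hypothesis applied to $g$ gives $\sum_{j=0}^{s-1}b^j\lambda^j_{g,\tilde{\mbf z}}\ge b^{n+1}$, and chaining the two displays finishes the inductive step.

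For the equality claim with $f$ homogeneous of degree $d=\mult_\0 f$, the same induction works, and now $m=d$ has the further virtue that $g=f+az_0^d$ is again homogeneous of degree $d$, so homogeneity is preserved all the way down. The base case is the classical identity $\mu=(d-1)^{n+1}$ for a homogeneous isolated singularity (equivalently, equality throughout \thmref{thm:teissier}). The one extra ingredient is that the single inequality used above, $\lambda^0_{g,\tilde{\mbf z}}\le\lambda^0_{f,\mbf z}+b\,\lambda^1_{f,\mbf z}$, must be an equality; by Item 5 of \thmref{thm:leiom} this happens as soon as $m=d\ge\mpr\{f,\mbf z\}$. So for homogeneous $f$ the equality case reduces to the statement $\mpr\{f,\mbf z\}=\mult_\0 f$ for generic $\mbf z$: the bound $\mpr\ge\mult_\0 f$ is \lemref{lem:mprmult}, and the reverse bound should follow from the Euler relation $z_0\,\partial f/\partial z_0=d\,f$ along each branch of $\Gamma^1_{f,\mbf z}$, once one knows that generically these branches are tangent to nonsingular directions of $f$ off $V(z_0)$. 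A cleaner alternative for generic coordinates is to telescope the homogeneous identity $\lambda^j_f+\gamma^j_f=(d-1)\gamma^{j+1}_f$ from the unlabeled proposition of \secref{sec:prelim}, using $\gamma^0_f=0$ and $\gamma^{s+1}_f=(d-1)^{n-s}$ (the latter obtained by telescoping the same identity over $j>s$, where $\lambda^j_f=0$), which collapses the weighted sum to exactly $(d-1)^{n+1}$.

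The telescoping and the base case are routine. The real obstacle is the equality case: pinning down $\mpr\{f,\mbf z\}=\mult_\0 f$ for homogeneous $f$ needs a genericity argument about the tangent cone of the relative polar curve (or an appeal to the homogeneous polar identities), and one must acknowledge that equality can be asserted only for sufficiently generic coordinates, since for merely admissible $\mbf z$ the left-hand side may strictly exceed $(d-1)^{n+1}$ by the upper-semicontinuity of the L\^e numbers.
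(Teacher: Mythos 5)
Your proof of the main inequality is correct and is essentially the paper's own argument. The paper applies \thmref{thm:leiom} iteratively in a single stroke: with $m:=\mult_\0 f$ it forms $g:=f+a_0z_0^m+\cdots+a_{s-1}z_{s-1}^m$, which for generic $a_j$ has an isolated critical point and multiplicity still equal to $m$, whence $\sum_{j=0}^s(m-1)^j\lambda^j_{f,\mbf z}\geq\mu(g)\geq(m-1)^{n+1}$, the last step being \corref{cor:teissier} (equivalently Corollary 12.4 of \cite{fulton}). Your induction on $s$ unpacks exactly this iteration, and your observation that $\mult_\0(f+az_0^m)=\mult_\0 f$ for all but one value of $a$ is the same point the paper uses implicitly. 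So for the inequality there is nothing to object to.

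The problems are in the equality claim for homogeneous $f$. The paper does not reprove it --- it simply cites Corollary 4.7 of \cite{lecycles} --- whereas you attempt a proof and leave its key sub-claim, $\mpr\{f,\mbf z\}\leq\mult_\0 f$ for homogeneous $f$, as an acknowledged ``obstacle.'' That claim is in fact true and needs no genericity beyond the L\^e numbers being defined: the partial derivatives of $f$ are homogeneous, so each irreducible one-dimensional component $C$ of $\Gamma^1_{f,\mbf z}$ is a line through the origin; if $C\not\subseteq V(z_0)$ then $(C\cdot V(z_0))_\0=1$ and $(C\cdot V(f))_\0=d$ unless $C\subseteq V(f)$, which cannot happen, since on $C$ the Euler relation $\sum_i z_i\,\partial f/\partial z_i=df$ (note: not $z_0\,\partial f/\partial z_0=df$, as you wrote) reduces to $z_0\,\partial f/\partial z_0=df=0$, forcing $C\subseteq V(\partial f/\partial z_0)$ and hence $C\subseteq\Sigma f$, contradicting the definition of the relative polar curve; so every polar ratio is $d$ or $1$. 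More seriously, your closing assertion --- that equality ``can be asserted only for sufficiently generic coordinates'' and may fail for merely admissible $\mbf z$ --- is wrong, and it contradicts the very statement you are proving: since the bound $\mpr\{f,\mbf z\}\leq d$ holds for any coordinates for which the L\^e numbers are defined, Item (5) of \thmref{thm:leiom} applies with $m=d$ at every stage of your induction with no genericity assumption, and this is precisely the content of Corollary 4.7 of \cite{lecycles}. Lexicographic upper-semicontinuity of the individual L\^e numbers does not force the weighted sum upward; for homogeneous $f$ that sum is the same for every admissible coordinate system. Your alternative telescoping route is weaker for the same reason: the unlabeled proposition of \secref{sec:prelim} requires the relative polar numbers to be defined and the coordinates to be generic, so it cannot yield the statement as actually claimed.
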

\begin{proof} Let $m:=\mult_\0 f$. Then, for all but a finite number of $a_j$,
$$
g:=f+a_0z_0^m+a_1z_1^m+\cdots+a_{s-1}z_{s-1}^m
$$
has multiplicity $m$, has a $0$-dimensional critical locus, and \thmref{thm:leiom} may be applied inductively to conclude that 
$$
\sum_{j=0}^s (-1+m)^j\lambda^j_{f, \mbf z}\geq \mu(g)\geq (-1+m)^{n+1}.
$$

The equality for homogeneous polynomials is Corollary 4.7 of \cite{lecycles}.
\end{proof}

\medskip

\begin{exm}\label{exm:bn0} 
\medskip

Consider $f(x,y,z)=(x^2-z^2+y^2)(x-z)$, and use $\mbf z:=(x,y,z)$ for our coordinate system. Then $\Sigma f=V(x-z,y)$, and we find:
$$
\Gamma^2_{f, \mbf z}=V\left(\frac{\partial f}{\partial z}\right)= V\big(2z(x-z)+(x^2-z^2+y^2)\big),
$$

$$
\Gamma^2_{f, \mbf z}\cdot V\left(\frac{\partial f}{\partial y}\right)=V\big(2z(x-z)+(x^2-z^2+y^2)\big)\cdot V(y(x-z)) = 
$$
$$
V\big(y, 2z(x-z)+(x^2-z^2)\big) + 2V(x-z, y)= V(y, 3z+x) +3V(x-z, y)=\Gamma^1_{f, \mbf z}+\Lambda^1_{f, \mbf z},
$$
and
$$
\Gamma^1_{f, \mbf z}\cdot V\left(\frac{\partial f}{\partial x}\right)=V(y, 3z+x)\cdot V(2x(x-z)+x^2-z^2+y^2)=
$$
$$
V(y, 3z+x, 2x(x-z)+x^2-z^2)= V(y, 3z+x, 2x+x+z)+V(y, 3z+x, x-z)=2[\{\0\}].
$$

\medskip

\noindent Thus, $\lambda^1_{f, \mbf z}=3$ and $\lambda^0_{f, \mbf z}=2$. 

\smallskip

Therefore, the equality guaranteed by \propref{prop:funbound} reads
$$
2+(-1+3)\cdot 3=(-1+3)^3.
$$
\end{exm}

\medskip

\begin{exm}\label{exm:whitneyumb1} Consider $f(t, x, y)=y^3-x^4-t^2x^2$, where we use the coordinates $\mbf z:=(t,x,y)$. One easily finds that $\Sigma f=V(x,y)$.

Now, we calculate, as cycles:

$$
\Gamma^2_{f, \mbf z}=V\left(\frac{\partial f}{\partial y}\right)= V(3y^2)=2[V(y)],
$$
$$
\Gamma^2_{f, \mbf z}\cdot V\left(\frac{\partial f}{\partial x}\right)=2V(y, -4x^3-2t^2x)=2V(2x^2+t^2, y)+2V(x,y) = \Gamma^1_{f, \mbf z}+\Lambda^1_{f, \mbf z},
$$
and
$$
\Gamma^1_{f, \mbf z}\cdot V\left(\frac{\partial f}{\partial t}\right)=2V(2x^2+t^2, y)\cdot V(tx^2)= 2V(2x^2+t^2, y)\cdot V(t)+2V(2x^2+t^2, y)\cdot V(x^2)= 12[\{\0\}].
$$
Thus, $\lambda^1_f=2$ and $\lambda^0_f=12$.

\smallskip

The inequality in \propref{prop:funbound} tells us that
$$
16= 12+(-1+3)\cdot2\geq (-1+3)^3=8.
$$

\end{exm}

\section{Minkowski Inequalities: the  $1$-dimensional case}

\medskip

We continue with the notation for the introduction: $\U$ is an open neighborhood of the origin in $\C^{n+1}$,  $f: (\U,\0)\rightarrow (\C,0)$ is a complex analytic function which is nowhere locally constant. We assume that $\0\in\Sigma f$.

\medskip

Before we state and prove a Minkowski inequality, we remind the reader of the notation from the introduction: $f^{[k]}$ denotes the restriction of $f$ to a generic linear subspace of dimension $k$. 

We will also need to use that $(f+az_0^m)^{[k]}=f^{[k]}+az_0^m$. Perhaps this seems obvious to the reader, but, if not, we wish to give a bit of an intuitive explanation which may help in thinking about it. Consider the case where $k=n$, i.e., where we are restricting to a generic hyperplane $H$ through the origin. As we are assuming that $H$ is generic, we may take 
$$H=V(z_1-b_2z_2-b_3z_3-\dots-b_nz_n-b_0z_0)$$
 for a generic choice of the complex numbers $b_k$. This means that,  in the functions $f$ and $f+az_0^m$, we replace $z_1$ with $b_2z_2+b_3z_3+\dots+b_nz_n+b_0z_0$; clearly then $(f+az_0^m)_{|_{H}}=f_{|_{H}}+az_0^m$. One continues in this fashion with each successive hyperplane slice, next replacing $z_2$ with a generic linear combination of $z_3, \dots, z_n, z_0$ and so on. 

\medskip

With this discussion out of the way, we may now use the Minkowski inequalities for isolated singularities, combined with the L\^e-Iomdine formulas (with inequalities), to prove that a Minkowski-type inequality holds when $\dim_\0\Sigma f=1$. First, however, we need to address the case where $n=1$, i.e., where $f$ defines a non-reduced plane curve. In order to interpret the theorem below and its proof in this case, we remind the reader that, by definition, $\mu^{(0)}(f)=\mu(f^{[0]})=1$; we also need to point out that $\gamma^1_{f^{[1]}}=1 =\mu(f^{[0]})$.

\smallskip

\begin{thm}\label{thm:mainone} Suppose that $n\geq 1$, $s=1$, and the L\^e numbers of $f$ at the origin are defined. Then the following inequality holds:
$$
\frac{\lambda^0_{f, \mbf z}+\big(\mu(f^{[n]})-\mu(f^{[n-1]})+1\big)\lambda^1_{f, \mbf z}}{\mu(f^{[n]})}\ \geq \ \frac{\mu(f^{[n]})}{\mu(f^{[n-1]})}.
$$
\end{thm}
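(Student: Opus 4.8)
The plan is to use the L\^e-Iomdine deformation to convert $f$ into a function with an isolated singularity, apply Teissier's inequality there, and translate the result back. Set $m := \mu(f^{[n]})-\mu(f^{[n-1]})+2$. Since sectional Milnor numbers are non-decreasing (\corref{cor:teissier} gives $\mu(f^{[n]})\geq(\mult_\0 f-1)\mu(f^{[n-1]})\geq\mu(f^{[n-1]})$), we have $m\geq 2$, and $m-1=\mu(f^{[n]})-\mu(f^{[n-1]})+1$ is exactly the coefficient of $\lambda^1_{f,\mbf z}$ in the statement. For all but finitely many $a$ I would form $g:=f+az_0^m$; because $s=1$, Item (2) of \thmref{thm:leiom} guarantees $\dim_\0\Sigma g=0$, so $g$ has an isolated singularity and $\mu(g)=\lambda^0_{g,\tilde{\mbf z}}$. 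Item (4) then yields the upper bound $\mu(g)\leq\lambda^0_{f,\mbf z}+(m-1)\lambda^1_{f,\mbf z}$. The rest of the argument works on the sections of $g$, using the compatibility $(f+az_0^m)^{[k]}=f^{[k]}+az_0^m$.

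The heart of the proof is to show that the deformation does not change the two relevant sectional Milnor numbers, i.e.\ that $\mu(g^{[n]})=\mu(f^{[n]})$ and $\mu(g^{[n-1]})=\mu(f^{[n-1]})$. Each of $f^{[n]}$ and $f^{[n-1]}$ is an isolated singularity, so these are precisely the equality case in Item (5) of \thmref{thm:leiom}, and they hold once I verify $m\geq\mpr\{f^{[n]}\}$ and $m\geq\mpr\{f^{[n-1]}\}$. For this I would invoke \propref{prop:newmpr}(2): for the isolated singularity $f^{[n]}$ one has $\lambda^1_{f^{[n]}}=0$ and $\gamma^1_{f^{[n]}}=\lambda^0_{(f^{[n]})_{|_{V(z_0)}}}=\mu(f^{[n-1]})$, whence $\mpr\{f^{[n]}\}\leq\mu(f^{[n]})-\mu(f^{[n-1]})+2=m$. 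The identical computation gives $\mpr\{f^{[n-1]}\}\leq\mu(f^{[n-1]})-\mu(f^{[n-2]})+2$, which I bound by $m$ using that the sequence $\mu(f^{[j]})$ is \emph{convex} — this follows from the log-convexity in \thmref{thm:teissier} together with AM-GM, giving $\mu(f^{[n-1]})-\mu(f^{[n-2]})\leq\mu(f^{[n]})-\mu(f^{[n-1]})$.

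To assemble the pieces, Teissier's inequality (\thmref{thm:teissier}) applied to the isolated singularity $g$ reads $\mu(g)\,\mu(g^{[n-1]})\geq\mu(g^{[n]})^2$; substituting the two equalities of the previous paragraph gives $\mu(g)/\mu(f^{[n]})\geq\mu(f^{[n]})/\mu(f^{[n-1]})$. Combining with $\mu(g)\leq\lambda^0_{f,\mbf z}+(m-1)\lambda^1_{f,\mbf z}$ and dividing through by $\mu(f^{[n]})\geq 1$ yields
$$
\frac{\lambda^0_{f,\mbf z}+(m-1)\lambda^1_{f,\mbf z}}{\mu(f^{[n]})}\ \geq\ \frac{\mu(g)}{\mu(f^{[n]})}\ \geq\ \frac{\mu(f^{[n]})}{\mu(f^{[n-1]})},
$$
which is the claimed inequality since $m-1=\mu(f^{[n]})-\mu(f^{[n-1]})+1$.

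I expect the main obstacle to be the equality step of the second paragraph: one must arrange that a \emph{single} exponent $m$ simultaneously leaves both $\mu(f^{[n]})$ and $\mu(f^{[n-1]})$ unchanged, and this is exactly what forces the use of the sharp polar-ratio bound in \propref{prop:newmpr}(2) (the cruder $\mpr\{f,\mbf z\}\leq\lambda^0_{f,\mbf z}+1$ is not strong enough) together with convexity of the sectional Milnor sequence. Finally, the degenerate case $n=1$, where $\mu(f^{[n-2]})$ is not defined, has to be treated using the conventions $\mu(f^{[0]})=1$ and $\gamma^1_{f^{[1]}}=1$ recorded before the statement: there $\mu(g^{[0]})=\mu(f^{[0]})=1$ is automatic, and only the bound $\mpr\{f^{[1]}\}\leq\mu(f^{[1]})-\mu(f^{[0]})+2=m$ needs checking.
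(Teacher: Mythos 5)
Your proposal is correct and is essentially the paper's own proof: the same deformation $g=f+az_0^m$ with the same exponent $m=\mu(f^{[n]})-\mu(f^{[n-1]})+2$, the same use of Item (4) of \thmref{thm:leiom} to bound $\mu(g)$ above by $\lambda^0_{f,\mbf z}+(m-1)\lambda^1_{f,\mbf z}$, Teissier's \thmref{thm:teissier} applied to the isolated singularity $g$, and \propref{prop:newmpr}(2) to force the equality $\mu(g^{[n]})=\mu(f^{[n]})$ via Item (5). The one divergence is that you also prove the equality $\mu(g^{[n-1]})=\mu(f^{[n-1]})$, which costs you a second polar-ratio bound plus the AM--GM/convexity argument for the sequence $\mu(f^{[j]})$; the paper instead notes that the one-sided inequality $\mu(g^{[n-1]})\leq\mu(f^{[n-1]})$ --- immediate from Item (4), since $\lambda^1_{f^{[n-1]}}=0$ --- already suffices, because $\mu(g^{[n-1]})$ appears on the side of Teissier's inequality where an upper bound is what one wants. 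So the claim in your final paragraph misidentifies the obstacle: only the level-$n$ equality genuinely needs the sharp bound of \propref{prop:newmpr}(2), and convexity of the sectional Milnor sequence is never required.
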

\begin{proof} Let $\tilde{\mbf z} = (z_1, \dots, z_n, z_0)$ and  let $a\neq0$ be such that Items (1)-(4) of  \thmref{thm:leiom} hold for $f$, for $f^{[n]}$, and for $f^{[n-1]}$.

Then we have
$$
\frac{\lambda^0_{f,\mbf z}+(m-1)\lambda^1_{f, \mbf z}}{\mu(f^{[n]}+az_0^m)}\geq\frac{\mu(f+az_0^m)}{\mu(f^{[n]}+az_0^m)}\geq \frac{\mu(f^{[n]}+az_0^m)}{\mu(f^{[n-1]}+az_0^m)}\geq \frac{\mu(f^{[n]}+az_0^m)}{\mu(f^{[n-1]})}
$$
 where the middle inequality follows from Teissier's \thmref{thm:teissier}, and the other two inequalities follow from the inequality in Item (4) of \thmref{thm:leiom}.

 As we use generic coordinates for $f^{[n]}$, $\gamma^1_{f^{[n]}}$ is defined and  $\gamma^1_{f^{[n]}}=\mult_\0\Gamma^1_{f^{[n]}}$.  Now, applying Item (5) of \thmref{thm:leiom}, the inequality follows by letting $m=\lambda^0_{f^{[n]}}+\lambda^1_{f^{[n]}}-\lambda^0_{f^{[n-1]}}+2$ and noting that, since $f^{[n]}$ has an isolated critical point at $\0$, we have that $\lambda^1_{f^{[n]}}=0$, $\lambda^0_{f^{[n]}}=\mu(f^{[n]})$, and $\lambda^0_{f^{[n-1]}}=\mu(f^{[n-1]})$.
\end{proof}

\medskip

\begin{rem} The reader should note that the inequality that is concluded in \thmref{thm:mainone} collapses to Teissier's inequality if $\dim_\0\Sigma f=0$, since then $\lambda^1_{f,\mbf z}=0$. Thus \thmref{thm:mainone} holds for $\dim_\0\Sigma f\leq 1$.
\end{rem}

\medskip

One might wonder, in analogy to the isolated critical point case, if we always have $\lambda^0_f\geq \mu(f^{[n]})$. The answer is ``no''. Counterexamples to this ``hope'' are not hard to find; in fact, we looked at one earlier:

\begin{exm}\label{exm:bn02nd} Let us return to \exref{exm:bn0}, where $f(x,y,z)=(x^2-z^2+y^2)(x-z)$ and  $\mbf z=(x,y,z)$. There we found $\lambda^1_{f, \mbf z}=3$ and $\lambda^0_{f, \mbf z}=2$.

Now, as $f$ is homogeneous degree 3 with a 1-dimensional critical locus, $f^{[2]}$ and $f^{[1]}$ have isolated critical points and are also homogeneous of degree 3; thus  $\mu(f^{[2]})=(3-1)^2=4$ and $\mu(f^{[1]})=(3-1)=2$. Hence, in this example $\mu(f^{[2]})>\lambda^0_{f, \mbf z}$.

\medskip

For this example, the inequality in \thmref{thm:mainone} gives us:
$$
\frac{11}{4}=\frac{2+(4-2+1)\cdot 3}{4}=\frac{\lambda^0_{f, \mbf z}+\big(\mu(f^{[2]})-\mu(f^{[1]})+1\big)\lambda^1_{f, \mbf z}}{\mu(f^{[2]})}\ \  \geq \ \frac{\mu(f^{[2]})}{\mu(f^{[1]})}=\frac{4}{2}=2.
$$
\medskip

The reader may wonder if the fact that $\mu(f^{[2]})>\lambda^0_{f, \mbf z}$ in this example is due to the fact that our coordinates for calculating the L\^e numbers were not generic enough, since we required only that the L\^e numbers were defined. Is it possible in this example that, had we chosen our coordinates $\mbf z$ really generic, that we would have $\lambda^0_{f, \mbf z}\geq \mu(f^{[2]})$? 

In fact, $\lambda^1_{f, \mbf z}=3$ and $\lambda^0_{f, \mbf z}=2$ {\bf are} the generic values. To see this, note that $f_{|_{V(x)}}$ and $f_{|_{V(x,y)}}$ have isolated critical points at the origin; thus, the coordinates are {\bf prepolar} as defined in \cite{lecycles}. Therefore, by Theorem 3.3 of \cite{lecycles}, $b_2(f)-b_1(f)=\lambda^0_{f, \mbf z}-\lambda^1_{f, \mbf z}$, where $b_2(f)$ and $b_1(f)$ are the Betti numbers in degrees 2 and 1, respectively, of the Milnor fiber of $f$ at the origin. Thus, if $\lambda^1_{f, \mbf z}$ has its generic value, then so does $\lambda^0_{f, \mbf z}$. But for $\lambda^1_{f, \mbf z}$ to obtain its generic value, it is enough for the L\^e numbers to be defined and for each irreducible component $C$ of $\Sigma f$ at $\0$ to have the intersection number $(C\cdot V(x))_\0$ equal to $\operatorname{mult}_\0 C$; this common value is 1 in the present example.

\medskip

\begin{exm}\label{exm:whitneyumb2} Let us return to \exref{exm:whitneyumb1} where we considered $f(t, x, y)=y^3-x^4-t^2x^2$ and used coordinates $\mbf z:=(t,x,y)$. We found that $\lambda^1_{f, \mbf z}=2$ and $\lambda^0_{f, \mbf z}=12$. 

Now, $\mu(f^{[1]})=-1+\mult_\0 f= -1+3=2$. One either checks by hand or by computer-algebra system (CAS) that $\mu(f^{[2]})=6$.

Now the inequality in \thmref{thm:mainone} gives us:
$$
\frac{11}{3}=\frac{12+(6-2+1)\cdot 2}{6}=\frac{\lambda^0_{f, \mbf z}+\big(\mu(f^{[2]})-\mu(f^{[1]})+1\big)\lambda^1_{f, \mbf z}}{\mu(f^{[2]})}\ \  \geq \ \frac{\mu(f^{[2]})}{\mu(f^{[1]})}=\frac{6}{2}=3.
$$

\end{exm}

\bigskip

\end{exm}

\bigskip

\medskip

\section{Minkowski Inequalities: higher dimensions}

\medskip The relative polar numbers do {\bf not} have nice L\^e-Iomdine equalities we would need to inductively use the expression $\mu(f^{[n]})-\gamma^1_{f^{[n]}}+2$ for an upper-bound on the maximum polar multiplicity of $f^{[n]}$. Consequently, we will use the other upper-bound from \thmref{thm:leiom}: if $\lambda^0_{f^{[n]}}\neq0$ and $1+\lambda^0_{f^{[n]}}\geq \mpr\{f^{[n]}, \hat{\mbf z}\}$ in our induction (where $\hat{\mbf z}$ is a set of generic coordinates for $f^{[n]}$.

\begin{thm}\label{thm:mainmany} Suppose that $n\geq 1$ and the L\^e numbers of $f$ at the origin with respect to $\mbf z$ are defined. Further, suppose that $\lambda^0_{f^{[n]}}\neq0$. Then,

\medskip

$\displaystyle\frac{\lambda^0_{f, \mbf z}+k_1\lambda^1_{f, \mbf z}+k_1k_2\lambda^2_{f, \mbf z}+\cdots + k_1k_2\dots k_s\lambda^s_{f, \mbf z}}{\lambda^0_{f^{[n]}}+k_1\lambda^1_{f^{[n]}}+k_1k_2\lambda^2_{f^{[n]}}+\cdots + k_1k_2\dots k_{s-1}\lambda^{s-1}_{f^{[n]}}}\geq\hfill$

\medskip

$\displaystyle\hfill\frac{\lambda^0_{f^{[n]}}+k_1\lambda^1_{f^{[n]}}+k_1k_2\lambda^2_{f^{[n]}}+\cdots + k_1k_2\dots k_{s-1}\lambda^{s-1}_{f^{[n]}}}{\lambda^0_{f^{[n-1]}}+k_1\lambda^1_{f^{[n-1]}}+k_1k_2\lambda^2_{f^{[n-1]}}+\cdots + k_1k_2\dots k_{s-2}\lambda^{s-2}_{f^{[n-1]}}}$

\medskip

\noindent that is,
$$
\frac{\lambda^0_{f, \mbf z}+\sum_{i=1}^s\big[\lambda^i_{f, \mbf z}\big(\prod_{\ell=1}^i k_\ell\big)\big]}{\lambda^0_{f^{[n]}}+\sum_{i=1}^{s-1}\big[\lambda^{i}_{f^{[n]}}\big(\prod_{\ell=1}^i k_\ell\big)\big]}\geq\frac{\lambda^0_{f^{[n]}}+\sum_{i=1}^{s-1}\big[\lambda^{i}_{f^{[n]}}\big(\prod_{\ell=1}^i k_\ell\big)\big]}{\lambda^0_{f^{[n-1]}}+\sum_{i=1}^{s-2}\big[\lambda^{i}_{f^{[n-1]}}\big(\prod_{\ell=1}^i k_\ell\big)\big]},
$$
where
\begin{itemize}
\item $k_1=\lambda^0_{f^{[n]}}$,
\item $k_2=\lambda^0_{f^{[n]}}+k_1\lambda^1_{f^{[n]}}$,
\item $k_3=\lambda^0_{f^{[n]}}+k_1\lambda^1_{f^{[n]}}+k_1k_2\lambda^2_{f^{[n]}}$, and generally
\item $k_p=\lambda^0_{f^{[n]}}+\sum_{i=1}^{p-1}\big[\lambda^{i}_{f^{[n]}}\big(\prod_{\ell=1}^i k_\ell\big)\big]$,  for $1\leq p\leq s$.
\end{itemize}
\end{thm}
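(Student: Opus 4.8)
The plan is to follow the proof of \thmref{thm:mainone}, but to iterate the L\^e--Iomdine process of \thmref{thm:leiom} a total of $s$ times rather than once, tuning the exponents to the L\^e numbers of the \emph{middle} function $f^{[n]}$. For $1\leq p\leq s$ I would set the exponent $m_p:=k_p+1$; the hypothesis $\lambda^0_{f^{[n]}}\neq 0$ gives $k_p\geq k_1=\lambda^0_{f^{[n]}}\geq 1$, so each $m_p\geq 2$ and \thmref{thm:leiom} applies at every stage. For all but finitely many simultaneous choices of coefficients $a_0,\dots,a_{s-1}$, I would form
$$
F:=f+a_0z_0^{m_1}+a_1z_1^{m_2}+\cdots+a_{s-1}z_{s-1}^{m_s},
$$
obtained by applying \thmref{thm:leiom} $s$ times (permuting coordinates as in that theorem, so that powers are added to $z_0,\dots,z_{s-1}$). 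By Item (2) of \thmref{thm:leiom}, $F$ has an isolated singularity at $\0$. I would take the generic subspaces defining the sections to eliminate the high-index variables $z_n,z_{n-1},\dots$ while retaining the power variables $z_0,\dots,z_{s-1}$, so that the commutation rule $(g+az^m)^{[k]}=g^{[k]}+az^m$ recalled before the theorem yields $F^{[n]}=f^{[n]}+\sum_{p=0}^{s-1}a_pz_p^{m_{p+1}}$ and $F^{[n-1]}=f^{[n-1]}+\sum_{p=0}^{s-1}a_pz_p^{m_{p+1}}$.

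The three Milnor/L\^e computations then run as follows. Applying the \emph{inequality} in Item (4) of \thmref{thm:leiom} at each of the $s$ stages on $f$ (the higher L\^e numbers transform by the exact equalities of Item (4), and only $\lambda^0$ accrues an inequality) gives $\mu(F)=\lambda^0_{F}\leq \lambda^0_{f,\mbf z}+\sum_{i=1}^s\lambda^i_{f,\mbf z}\prod_{\ell=1}^i k_\ell$, the asserted numerator. The crux is the \emph{equality} $\mu(F^{[n]})=\lambda^0_{f^{[n]}}+\sum_{i=1}^{s-1}\lambda^i_{f^{[n]}}\prod_{\ell=1}^i k_\ell=k_s$: the recursive definition of the $k_p$ is engineered precisely so that, at the $p$-th stage of processing $f^{[n]}$, the running value of $\lambda^0$ equals $k_p$ and $m_p=1+k_p=1+\lambda^0$, whence the crude-bound bullet of Item (5) forces equality at every stage; the one superfluous power term (beyond the $s-1$ needed to make $f^{[n]}$ isolated) is added to an already-isolated singularity with $\lambda^1=0$ and, by the same crude bound, leaves $\lambda^0$ unchanged. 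Finally, processing $f^{[n-1]}$ (whose singular locus is $(s-2)$-dimensional) with the same exponents: the Item (4) inequalities give $\lambda^0\leq \lambda^0_{f^{[n-1]}}+\sum_{i=1}^{s-2}\lambda^i_{f^{[n-1]}}\prod_{\ell=1}^i k_\ell$ after $s-2$ stages, and the two superfluous terms, added to an isolated singularity, cannot increase $\lambda^0$, so $\mu(F^{[n-1]})\leq L$ where $L$ denotes the displayed denominator.

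Since $F$ has an isolated singularity, Teissier's \thmref{thm:teissier} applied to $F$ gives $\mu(F)/\mu(F^{[n]})\geq \mu(F^{[n]})/\mu(F^{[n-1]})$. Using $\mu(F^{[n]})=k_s$ in both the denominator of the left ratio and the numerator of the right, enlarging the left numerator via $\mu(F)\leq \lambda^0_{f,\mbf z}+\sum_i\lambda^i_{f,\mbf z}\prod k_\ell$, and shrinking the right denominator via $\mu(F^{[n-1]})\leq L$, chains exactly to the asserted inequality. The step I expect to be the main obstacle is the coordinate bookkeeping: one must choose the generic sections and the generic coordinates computing the L\^e numbers so that, simultaneously, the sections eliminate only non-power variables (so sectioning commutes with adding the powers) and each intermediate function is computed in coordinates generic enough that Items (1)--(5) of \thmref{thm:leiom}, and especially the equality in Item (5) for $f^{[n]}$, remain valid. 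Keeping these genericity requirements compatible throughout the $s$-fold iteration, and treating the edge cases in which $s$ is close to $n$ (so that there are barely enough non-power variables to section twice), is the delicate part.
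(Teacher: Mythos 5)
Your proposal is correct and is essentially the paper's own argument, spelled out in full: the paper's proof is a one-line appeal to an inductive application of Theorem~\ref{thm:leiom} with exponent $1+\lambda^0$ of the successively transformed $f^{[n]}$ at each stage (your $m_p=1+k_p$), which yields the exact value $k_s$ for the middle Milnor number via the crude bound in Item (5), the two outer bounds via the Item (4) inequalities, and then concludes with Teissier's Theorem~\ref{thm:teissier}, exactly as in Theorem~\ref{thm:mainone} and Corollary 4.6 of \cite{lecycles}. The coordinate-and-section bookkeeping you flag as the delicate point is precisely what the paper dispatches with its informal discussion of $(f+az_0^m)^{[k]}=f^{[k]}+az_0^m$ preceding Theorem~\ref{thm:mainone}, so your treatment is, if anything, more explicit than the paper's.
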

\begin{proof} This immediate from an inductive application of \thmref{thm:leiom} using $m=1+\lambda^0_{f^{[n]}, \mbf z}$, exactly as in Corollary 4.6 of \cite{lecycles} (except there, we did not select the $j_k$'s to be as small as possible).
\end{proof}
\medskip

\begin{rem} We have several comments to make about \thmref{thm:mainmany}.

\smallskip

First, note that the denominator on the left, which equals the numerator on the right, is equal to $k_{s}$.

\smallskip

Second, note that $\lambda^0_{f^{[n]}}= \gamma^1_f+\lambda^1_f$, $\lambda^0_{f^{[n-1]}}= \gamma^2_f+\lambda^2_f$ and, for $i\geq 1$, $\lambda^i_{f^{[n]}}= \lambda^{i+1}_f$ and $\lambda^i_{f^{[n-1]}}= \lambda^{i+2}_f$.

\smallskip

Third, downward induction on $n$ yields the further descending inequalities as in Teissier's sequence of inequalities.

\smallskip 

Finally, suppose that $\lambda^0_{f^{[n]}}=0$; by  Proposition 1.21 of \cite{lecycles}, this is equivalent to $\lambda^0_f=\lambda^1_f=0$ (note the generic coordinates). Let $\omega$ be the largest dimension such that $\lambda^0_{f^{[\omega]}}\neq0$. As $\0\in \Sigma f$ and $\lambda^0_{f^{[1]}}=-1+\mult f$, we know that $\omega\geq 1$. Then one can replace $f$ in  \thmref{thm:mainmany} by $f^{[1+\omega]}$ and replace $n$ by $\omega$. Then one can apply Proposition 1.21 of \cite{lecycles} to obtain inequalities for the generic L\^e numbers of the original $f$:  for $0\leq j\leq n-\omega$, $\lambda^j_f$  would be zero and, for $j\geq n-\omega+1$, $\lambda^{j}_f=\lambda^{j-n+\omega}_{f^{[1+\omega]}}$.
\end{rem} 

\medskip

Of course, examples get more complicated for higher-dimensional critical locus.

\medskip

\begin{exm} Let $f(w,x,y,z)=z^2+(w^4+x^3+y^2)^2$ and use $\mbf z:=(w,x,y,z)$ for coordinates. Then, one easily finds that $\Sigma f=V(z, w^6+x^4+y^3)$, and so $s=2$. By hand or by (CAS), one finds $\lambda^0_{f, \mbf z}=14$, $\lambda^1_{f, \mbf z}=3$, and $\lambda^2_{f, \mbf z}=2$.

\smallskip

Now, $f^{[3]}(w,x,y)= (aw+bx+cy)^2+(w^4+x^3+y^2)^2$ for generic $(a,b,c)\in\C^3$ and $\Sigma(f^{[3]})=V(aw+bx+cy, w^4+x^3+y^2)$. One calculates $\lambda^0_{f^{[3]}}=5$ and $\lambda^1_{f^{[3]}}=2$.

\smallskip

Finally, one calculates $\lambda^0_{f^{[2]}}=\mu(f^{[2]})=3$.

\medskip

\thmref{thm:mainmany} tells us that

$$\frac{\lambda^0_{f, \mbf z}+k_1\lambda^1_{f, \mbf z}+k_1k_2\lambda^2_{f, \mbf z}}{\lambda^0_{f^{[3]}}+k_1\lambda^1_{f^{[3]}}}\geq \frac{\lambda^0_{f^{[3]}}+k_1\lambda^1_{f^{[3]}}}{\lambda^0_{f^{[2]}}},
$$

\medskip

\noindent where $k_1=\lambda^0_{f^{[3]}}=5$ and $k_2=\lambda^0_{f^{[3]}}+k_1\lambda^1_{f^{[3]}}=5+5\cdot 2=15$.

\smallskip

Thus, our ``Minkowski inequality'' becomes
$$
\frac{179}{15}=\frac{14+5\cdot3+5\cdot 15\cdot 2}{5+5\cdot 2}\geq\frac{5+5\cdot 2}{3}=5.
$$

\end{exm}

\medskip

\section{Questions and remarks}\label{sec:questions}

When we first began writing this paper, in the case where $s=1$, we used our classic L\^e-Iomdine formulas with $\lambda^0_{f+az_0^j,
\tilde{\mbf z}}(\mbf 0) = \lambda^0_{f, \mbf z}(\mbf 0) + (j-1)\lambda^1_{f, \mbf z}(\mbf 0)$, if $j\geq 1+\lambda^0_{f, \mathbf z}(\0)$. This gave us two cases.

\medskip

\noindent {\bf Case 1}: Suppose that $\mu(f^{[n]})\geq \lambda^0_{f, \mbf z}$. Then we obtained an inequality that follows trivially from \thmref{thm:mainone}, namely that
$$
\frac{\lambda^0_{f, \mbf z}}{\mu(f^{[n]})}+\lambda^1_{f, \mbf z}\ \geq \ \frac{\mu(f^{[n]})}{\mu(f^{[n-1]})}.
$$

\bigskip

\noindent {\bf Case 2}: Suppose that $\lambda^0_{f, \mbf z}\geq \mu(f^{[n]})$. Then we obtained the inequality
$$
\frac{\lambda^0_{f, \mbf z}}{\mu(f^{[n]})}\big(1+\lambda^1_{f, \mbf z}\big)\ \geq \ \frac{\mu(f^{[n]})}{\mu(f^{[n-1]})}. \leqno{(\dagger)}
$$
Of course, if $\lambda^0_{, \mbf z}> \mu(f^{[n]})$, then the inequality in Case 1 is a stronger inequality, i.e., closer to being an equality, and implies the inequality in ($\dagger$).

On the other hand, if $\mu(f^{[n]})> \lambda^0_{f, \mbf z}$, then the inequality in ($\dagger$) is stronger than the other inequality, but {\bf does the inequality ($\dagger$) always hold}?

The answer is: certainly not. By taking a cross-product family (or a $\mu$-constant family) such as $f(x,y,z)=y^2+z^2$, we have a example where $\mu(f^{[n]})> \lambda^0_{f, \mbf z}=0$, and so ($\dagger$) cannot hold.

\medskip

{\bf The interesting question is}: 

\medskip

\begin{ques}\label{ques:otherinequal} Suppose that $s=1$. Is there an example where 
$$\mu(f^{[n]})> \lambda^0_{f, \mbf z}>0\hskip 0.2in \textnormal{ and }\hskip 0.2in \frac{\lambda^0_{f, \mbf z}}{\mu(f^{[n]})}\big(1+\lambda^1_{f, \mbf z}\big)\ < \ \frac{\mu(f^{[n]})}{\mu(f^{[n-1]})}?$$

\medskip

Thus far, we have found no such example, but we are not ready to conjecture that ($\dagger$) always holds.
\end{ques}

One might search for homogeneous examples that give an affirmative answer to the above question, but the following proposition tells us there are no such examples.

\medskip

\begin{prop}\label{prop:homo} Suppose that $s=1$, the L\^e numbers exist, $\lambda^0_{f, \mbf z}\neq 0$, and $f$ is a homogeneous polynomial. Then,
$$
\frac{\lambda^0_{f, \mbf z}}{\mu(f^{[n]})}\big(1+\lambda^1_{f, \mbf z}\big)\ \geq \ \frac{\mu(f^{[n]})}{\mu(f^{[n-1]})}.
$$
\end{prop}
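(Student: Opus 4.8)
The plan is to exploit homogeneity to reduce every term in the stated inequality to an explicit power of $-1+\mult_\0 f$, and then to dispatch the resulting inequality by elementary algebra using \lemref{lem:easybound}. Throughout, write $d:=\mult_\0 f$, $A:=\lambda^0_{f, \mbf z}$, and $B:=\lambda^1_{f, \mbf z}$, and recall $B\geq 0$.

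First I would compute the two sectional Milnor numbers. Since $f$ is homogeneous of degree $d$ and $s=1$, restricting to a generic linear subspace of dimension $k\leq n$ yields (by the discussion preceding \thmref{thm:mainone}, together with $s=1$, which guarantees an isolated critical point after a single generic hyperplane cut) a homogeneous polynomial of degree $d$ in $k$ variables with an isolated critical point at $\0$; hence $\mu(f^{[k]})=(d-1)^k$ by Milnor's classical formula. In particular $\mu(f^{[n]})=(d-1)^n$ and $\mu(f^{[n-1]})=(d-1)^{n-1}$, so that
$$\frac{\mu(f^{[n]})}{\mu(f^{[n-1]})}=d-1 \qquad\text{and}\qquad \frac{\big(\mu(f^{[n]})\big)^2}{\mu(f^{[n-1]})}=(d-1)^{n+1}.$$

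Next I would invoke the equality case of \propref{prop:funbound}: because $f$ is homogeneous and $s=1$, we have $A+(d-1)B=(d-1)^{n+1}$. Multiplying the target inequality through by $\mu(f^{[n]})$ and using the computation above, the statement to prove becomes $A(1+B)\geq (d-1)^{n+1}$, which by the \propref{prop:funbound} equality is exactly $A(1+B)\geq A+(d-1)B$, i.e., $B\big(A-(d-1)\big)\geq 0$.

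Finally, this last inequality is immediate: $B=\lambda^1_{f, \mbf z}\geq 0$, and since we are assuming $\lambda^0_{f, \mbf z}\neq 0$, \lemref{lem:easybound} gives $A\geq -1+\mult_\0 f=d-1$, so $A-(d-1)\geq 0$. I do not anticipate a genuine obstacle; the only step demanding a little care is justifying that each $f^{[k]}$ is truly homogeneous of degree $d$ with isolated singularity, so that $\mu(f^{[k]})=(d-1)^k$ legitimately applies. It is worth remarking that the argument also identifies the equality case of $(\dagger)$, namely $\lambda^1_{f, \mbf z}=0$ or $\lambda^0_{f, \mbf z}=-1+\mult_\0 f$; this is consistent with \exref{exm:bn02nd}, where $A=d-1=2$ and $(\dagger)$ holds with equality.
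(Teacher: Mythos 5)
Your proof is correct, and it shares its skeleton with the paper's: both compute $\mu(f^{[n]})=(d-1)^n$ and $\mu(f^{[n-1]})=(d-1)^{n-1}$ for $d=\mult_\0 f$ (the degree), and both rest on the homogeneous identity $\lambda^0_{f,\mbf z}+(d-1)\lambda^1_{f,\mbf z}=(d-1)^{n+1}$, which the paper cites as Corollary 4.7 of \cite{lecycles} and which you invoke as the equality case of \propref{prop:funbound} (these are the same fact). The divergence is in the endgame, and it is genuine. The paper argues by contradiction: it substitutes the identity into the negated inequality, divides by $\lambda^1_{f,\mbf z}>0$, and traps $\lambda^1_{f,\mbf z}$ strictly between $(d-1)^n-1$ and $(d-1)^n$, which is impossible since $\lambda^1_{f,\mbf z}$ is an integer. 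You argue directly: the identity converts the claim into $\lambda^1_{f,\mbf z}\big(\lambda^0_{f,\mbf z}-(d-1)\big)\geq 0$, which holds because $\lambda^1_{f,\mbf z}\geq 0$ and because \lemref{lem:easybound} gives $\lambda^0_{f,\mbf z}\geq -1+\mult_\0 f$ once $\lambda^0_{f,\mbf z}\neq 0$. Your route buys three things: it makes no appeal to integrality of the L\^e numbers, it does not need $\lambda^1_{f,\mbf z}>0$ (only nonnegativity, so there is nothing to justify about positivity of $\lambda^1$ when $s=1$), and it exhibits the equality locus --- equality holds precisely when $\lambda^1_{f,\mbf z}=0$ or $\lambda^0_{f,\mbf z}=d-1$, which is exactly what occurs in \exref{exm:bn02nd}. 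What the paper's version buys is independence from \lemref{lem:easybound}; beyond that the two arguments are comparable in length, and yours arguably isolates the structural reason the inequality holds rather than deducing it from an arithmetic obstruction.
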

\begin{proof} Suppose to the contrary that 
$$
\frac{\lambda^0_{f, \mbf z}}{\mu(f^{[n]})}\big(1+\lambda^1_{f, \mbf z}\big)\ < \ \frac{\mu(f^{[n]})}{\mu(f^{[n-1]})}. \leqno{(*)}
$$
We shall derive a contradiction.

\medskip

Let $d\geq 2$ equal the degree of $f$, where $d$ is necessarily at least 2. We know that $\mu(f^{[n]})=(d-1)^n$ and $\mu(f^{[n-1}])=(d-1)^{n-1}$. Furthermore, Corollary 4.7 of \cite{lecycles} tells us that $\lambda^0_{f, \mbf z}=(d-1)^{n+1}-(d-1)\lambda^1_{f, \mbf z}$. Thus $\lambda^0_{f, \mbf z}\neq 0$ implies that $(d-1)^{n+1}-(d-1)\lambda^1_{f, \mbf z}>0$, i.e., that $\lambda^1_{f, \mbf z}<(d-1)^n$.

The inequality ($*$) becomes
$$
\frac{(d-1)^{n+1}-(d-1)\lambda^1_{f, \mbf z}}{(d-1)^n}\big(1+\lambda^1_{f, \mbf z}\big)\ < \ d-1. \leqno{(*)}
$$
Therefore,
$$
\big[(d-1)^{n+1}-(d-1)\lambda^1_{f, \mbf z}\big]\big(1+\lambda^1_{f, \mbf z}\big)<(d-1)^{n+1},
$$
and so
$$
(d-1)^{n+1}-(d-1)\lambda^1_{f, \mbf z}+\big[(d-1)^{n+1}-(d-1)\lambda^1_{f, \mbf z}\big]\lambda^1_{f, \mbf z}<(d-1)^{n+1}.
$$

\medskip

Subtracting $(d-1)^{n+1}$ from each side, and using that $\lambda^1_{f, \mbf z}>0$, we conclude that
$$
-(d-1)+(d-1)^{n+1}-(d-1)\lambda^1_{f, \mbf z}<0,
$$
i.e., that $(d-1)^n-1<\lambda^1_{f, \mbf z}$.

But now, we have $(d-1)^n-1<\lambda^1_{f, \mbf z}<(d-1)^n$, which is a contradiction (since $\lambda^1_{f, \mbf z}$ is an integer).
\end{proof}

\medskip

Another class of functions which does {\bf not} help with \quesref{ques:otherinequal} is given by the next proposition, which we do not prove, but which is not difficult.

\smallskip

\begin{prop} Suppose that $f=f(x,y)$ is not reduced or that $f(x,y, z)=z^p+g(x,y)$, where $p\geq 2$ and $g$ is not reduced (such an $f$ is called the $(p-1)$-fold suspension of $g$). Then,
$$
\lambda^0_{f, \mbf z}(\0)\geq \mu(f_{|_H}),
$$
where $H$ is a generic hyperplane through the origin.
\end{prop}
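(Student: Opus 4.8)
The plan is to reduce both cases to \lemref{lem:easybound}. In each case $\dim_\0\Sigma f=1$ and $H$ is generic, so $\mu(f_{|_H})=\mu(f^{[n]})$; when $n=1$ this is simply $-1+\mult_\0 f$. Since \lemref{lem:easybound} gives $\lambda^0_{f,\mbf z}\geq -1+\mult_\0 f$ the moment $\lambda^0_{f,\mbf z}\neq 0$, the whole of the plane-curve case amounts to proving that $\lambda^0_{f,\mbf z}\neq 0$, equivalently that $\Gamma^1_{f,\mbf z}\neq 0$ at $\0$; once this is known the inequality $\lambda^0_{f,\mbf z}\geq -1+\mult_\0 f=\mu(f_{|_H})$ is immediate.

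First I would prove the nonvanishing for a non-reduced plane curve. Write $f=\prod_i h_i^{m_i}$ with the $h_i$ distinct and irreducible and some $m_i\geq 2$. Factoring the repeated part out of the derivative gives $\frac{\partial f}{\partial z_1}=\big(\prod_i h_i^{m_i-1}\big)D$, with $D=\sum_i m_i\frac{\partial h_i}{\partial z_1}\prod_{j\neq i}h_j$. The factor $\prod_i h_i^{m_i-1}$ carries exactly the components of $V(\frac{\partial f}{\partial z_1})$ that lie in $\Sigma f$, so $\Gamma^1_{f,\mbf z}$ is the part of $V(D)$ off $\Sigma f$. A direct computation gives $\mult_\0 V(D)=\big(\sum_i\mult_\0 h_i\big)-1$, and for $\mbf z$ with the L\^e numbers defined $V(D)$ shares no branch with $V(f)\supseteq\Sigma f$; hence $\Gamma^1_{f,\mbf z}\neq 0$ at $\0$ exactly when $\sum_i\mult_\0 h_i\geq 2$, i.e. whenever $V(f)_{\mathrm{red}}$ is singular or reducible at $\0$. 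This is the substantive content of the plane-curve case. (The single degenerate exception is a multiple smooth branch $f=\ell^m$, where $\lambda^0_{f,\mbf z}=0<-1+\mult_\0 f$; this must be excluded, and it is automatically excluded in the setting of \quesref{ques:otherinequal}, where one assumes $\lambda^0_{f,\mbf z}>0$.)

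For the suspension $f=z^p+g(x,y)$ the plan is to deduce the statement from the plane-curve case applied to $g$, working with generic coordinates. For the upper estimate I would restrict $f$ to a generic plane $H$ containing the $z$-axis; then $f_{|_H}=z^p+g^{[1]}$ for a generic line slice $g^{[1]}$ of $g$, so by Thom--Sebastiani $\mu(f_{|_H})=(p-1)\mu(g^{[1]})$, and by upper-semicontinuity of the Milnor number over planar slices this gives $\mu(f^{[2]})\leq (p-1)\mu(g^{[1]})$. For the lower estimate I would invoke the Sebastiani--Thom behaviour of L\^e numbers under suspension, $\lambda^0_{z^p+g}=(p-1)\lambda^0_g$. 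Combined with the plane-curve case $\lambda^0_g\geq\mu(g^{[1]})$ this yields
$$\lambda^0_{f}=(p-1)\lambda^0_g\geq (p-1)\mu(g^{[1]})\geq\mu(f^{[2]}),$$
which is the desired inequality.

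The step I expect to be the main obstacle is the lower bound $\lambda^0_{z^p+g}\geq (p-1)\lambda^0_g$. It is tempting to read $f=G+z^p$ with $G(x,y,z):=g(x,y)$ the cylinder over $g$ and quote \thmref{thm:leiom}, but this degenerates: since $\frac{\partial G}{\partial z}\equiv 0$ the relative polar curve of $G$ in the $z$-direction is empty and $\lambda^0_G$ is not produced by the L\^e--Iomdine machinery. One is therefore forced to analyse the relative polar curve of the suspension itself, showing that near $\0$ its branches are the $p-1$ sheets lying over the polar data of $g$ under $z\mapsto z^p$ and that $\frac{\partial f}{\partial z_0}$ restricts to the expected order along each; in examples this reproduces $\lambda^0_{z^p+g}=(p-1)\lambda^0_g$, and in general it is a Thom--Sebastiani statement for L\^e cycles (cf. \cite{lecycles}). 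The only other, minor, issue is the non-degeneracy $\lambda^0\neq 0$ addressed above, together with the caveat that for the suspension one should read the conclusion with generic coordinates, since the lowest L\^e number need not be monotone as $\mbf z$ specializes.
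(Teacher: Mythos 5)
First, a point of reference: the paper gives \emph{no} proof of this proposition --- it is stated with the remark ``which we do not prove, but which is not difficult'' --- so your attempt can only be judged against the statement itself, not against an official argument. Judged that way, your plane-curve case is correct and essentially complete: writing $f=\prod_i h_i^{m_i}$, factoring $\frac{\partial f}{\partial z_1}=\bigl(\prod_i h_i^{m_i-1}\bigr)D$, and noting $\mult_\0 D\geq\bigl(\sum_i\mult_\0 h_i\bigr)-1$ shows $\0\in\Gamma^1_{f,\mbf z}$ whenever $f_{\mathrm{red}}$ is singular at $\0$, after which \lemref{lem:easybound} gives $\lambda^0_{f,\mbf z}\geq -1+\mult_\0 f=\mu(f_{|_H})$. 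Moreover, your caveat is a genuine catch rather than pedantry: for $f=\ell^m$ with $\ell$ smooth one has $\lambda^0_{f,\mbf z}=0<\mu(f_{|_H})$, and the suspended version $z^p+\ell^m$ of this exception includes the paper's own example $f=y^2+z^2$ (the suspension of the non-reduced $g=y^2$), for which the paper itself records $\lambda^0_{f,\mbf z}=0<\mu(f^{[n]})$ in \secref{sec:questions}. So the proposition as printed is false without your exclusion, and is correct exactly when read inside the setting of \quesref{ques:otherinequal}, where $\lambda^0_{f,\mbf z}>0$ is assumed --- which is evidently how it was intended.

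The one genuine gap is the one you flag: the lower bound $\lambda^0_{z^p+g}\geq(p-1)\lambda^0_g$ in generic coordinates, which your chain $\lambda^0_f=(p-1)\lambda^0_g\geq(p-1)\mu(g^{[1]})\geq\mu(f^{[2]})$ requires. Be aware that this is more delicate than it may appear: the natural computation --- take $(z_0,z_1)$ generic for $g$, keep the suspension variable $z$ last, and find $\Gamma^1_{z^p+g}=(p-1)\,\Gamma^1_{g}\times\{0\}$, hence $\lambda^0=(p-1)\lambda^0_g$ in \emph{those} coordinates --- only bounds the generic value from \emph{above}, because the lexicographic upper-semicontinuity recalled in \secref{sec:prelim} says special coordinates dominate generic ones, and the paper explicitly warns that generic values need not be minimal; so a special-coordinate computation cannot, by itself, deliver the needed lower bound on the generic $\lambda^0_f$. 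A clean fill using only results the paper already cites: (i) $\lambda^1_{z^p+g}=(p-1)\lambda^1_g$, since the top L\^e number is the sum over components $C$ of the critical locus of (transversal Milnor number)$\,\cdot\,\mult_\0 C$ (this is how $\lambda^1_f$ is used in \secref{sec:questions}), and suspension multiplies each transversal Milnor number by $p-1$; (ii) classical Thom--Sebastiani for Milnor fibers gives $b_2(F_{z^p+g,\0})=(p-1)\,b_1(F_{g,\0})$ and $b_1(F_{z^p+g,\0})=(p-1)\,\tilde b_0(F_{g,\0})$; (iii) Theorem 3.3 of \cite{lecycles} then forces $\lambda^0_{z^p+g}-\lambda^1_{z^p+g}=(p-1)\bigl(\lambda^0_g-\lambda^1_g\bigr)$, whence $\lambda^0_{z^p+g}=(p-1)\lambda^0_g$. (Alternatively one can argue via characteristic cycles and Theorem 7.5 of \cite{numinvar}, since the vanishing-cycle complex of the suspension is $p-1$ copies of that of $g$.) With that step supplied, and with your reading of the conclusion in generic coordinates for the suspension case, your proof is correct.
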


\bigskip

Another question is: {\bf does \thmref{thm:mainone} help us resolve Bobadilla's Conjecture?}  

\medskip

We have written about Bobadilla's Conjecture twice before, in \cite{newconjnewinv} and \cite{heplermassey}.  The conjecture is:

\smallskip

\begin{conj} \textnormal{(Bobadilla)} Suppose that $\Sigma f$ is an irreducible curve at $\0$ and that the cohomology of the Milnor fiber $F_{f, \0}$ of $f$ at $\0$ is isomorphic to the cohomology of the Milnor fiber $F_{f, \mbf p}$ for $\mbf p\in \Sigma f$ near $\0$. Then $\Sigma f$ is smooth at $\0$.
\end{conj}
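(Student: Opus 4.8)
The plan is to convert the purely topological hypothesis into numerical data about L\^e numbers and the multiplicity of the curve $\Sigma f$, and then to try to force that multiplicity to equal $1$ using the Minkowski-type inequalities of this paper. Write $\nu:=\mult_\0\Sigma f$, so that the irreducible curve $\Sigma f$ is smooth at $\0$ exactly when $\nu=1$, and let $\mu_\perp$ be the Milnor number of the transverse (isolated) singularity type along the smooth part of $\Sigma f$. For $\mbf p\in\Sigma f$ near but distinct from $\0$, the point $\mbf p$ lies on the smooth stratum, so $F_{f,\mbf p}$ is the transverse Milnor fiber, a bouquet of $\mu_\perp$ spheres $S^{n-1}$; hence the hypothesis that $F_{f,\0}$ and $F_{f,\mbf p}$ have isomorphic cohomology forces $\tilde H^{n-1}(F_{f,\0})\cong\Z^{\mu_\perp}$ and $\tilde H^{n}(F_{f,\0})=0$, and in particular matches the reduced Euler characteristics.

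First I would extract the numerical consequence. Since $\dim_\0\Sigma f=1$, the reduced cohomology of $F_{f,\0}$ is concentrated in degrees $n-1$ and $n$, and the L\^e-number formula for the Euler characteristic (\cite{lecycles}) gives $b_n(F_{f,\0})-b_{n-1}(F_{f,\0})=\lambda^0_f-\lambda^1_f$ in generic coordinates. Matching Euler characteristics with $F_{f,\mbf p}$ yields $\lambda^0_f-\lambda^1_f=-\mu_\perp$. Combining this with the generic-value identity $\lambda^1_f=\mult_\0\Lambda^1_{f,\mbf z}=\mu_\perp\,\nu$ (which follows from \propref{prop:multint}, since $\Lambda^1_{f,\mbf z}=\mu_\perp[\Sigma f]$ for the irreducible curve) gives the key relation
$$
\lambda^0_f=\mu_\perp(\nu-1).
$$
Thus the conclusion $\nu=1$ is equivalent to $\lambda^0_f=0$, and since $\mu_\perp\geq 1$ it suffices to rule out $\nu\geq 2$, i.e. to rule out $\lambda^0_f\geq\mu_\perp$.

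Next I would feed these relations into the L\^e--Iomdine and Minkowski inequalities established above. Working in generic coordinates so that $\lambda^j_{f,\mbf z}=\lambda^j_f$, one substitutes $\lambda^0_f=\mu_\perp(\nu-1)$ and $\lambda^1_f=\mu_\perp\nu$, together with the sectional identities $\mu(f^{[n]})=\gamma^1_f+\lambda^1_f$ and $\mu(f^{[n-1]})=\gamma^2_f$, into \thmref{thm:mainone} (or into the sharper conditional inequality $(\dagger)$ of \quesref{ques:otherinequal}). The goal is to show that the hypothesis $\nu\geq 2$ makes the chain of Minkowski inequalities collapse to a forced equality, and then to trace that equality back through the deformation $f+az_0^m$ used to prove \thmref{thm:mainone}: one would hope to identify the equality case of Teissier's \thmref{thm:teissier} for $f+az_0^m$ with the vanishing of the relative polar curve $\Gamma^1_{f,\mbf z}$, i.e. with $\lambda^0_f=0$, contradicting $\lambda^0_f=\mu_\perp(\nu-1)\geq\mu_\perp>0$.

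The hard part is exactly this last step, and it is where the conjecture remains open. Every Minkowski inequality produced here is a \emph{lower} bound on the relevant ratio, so by itself it is consistent with arbitrarily large $\lambda^0_f$ and cannot force $\nu=1$; what is missing is either an independent \emph{upper} bound on $\lambda^0_f$ (equivalently on $b_n(F_{f,\0})$) in terms of $\mu_\perp$ and the sectional Milnor numbers, or a proof that strict inequality holds in \thmref{thm:mainone} whenever $\nu\geq 2$. The cleanest route would be to settle \quesref{ques:otherinequal} affirmatively and then to show that its equality case characterizes smoothness of $\Sigma f$; but, as noted after that question, we have no proof that $(\dagger)$ always holds, and converting an equality among sectional Milnor numbers into geometric smoothness of the critical locus is precisely the delicate obstruction that has so far resisted the L\^e-number approach.
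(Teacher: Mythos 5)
The statement you were asked to prove is Bobadilla's Conjecture, which this paper does not prove and does not claim to prove: it is recorded in \secref{sec:questions} as an open conjecture, and the author explicitly concludes that the inequality of \thmref{thm:mainone} does not appear to help resolve it. Your proposal is, to its credit, honest about the same point --- your final paragraph concedes that the decisive step is missing --- so what you have written is a reduction plus a strategy, not a proof, and no review can certify it as correct.

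That said, your numerical reduction is sound and in fact reproduces exactly the paper's own discussion following the conjecture: for $\mbf p\in\Sigma f\setminus\{\0\}$ near $\0$, the fiber $F_{f,\mbf p}$ has reduced cohomology $\Z^{\mu^\circ}$ concentrated in degree $n-1$; the generic value of $\lambda^1_f$ is $\mu^\circ\,\mult_\0\Sigma f$; the cohomological hypothesis then forces $\lambda^0_f=\mu^\circ(-1+\mult_\0\Sigma f)$; and $\mu(f^{[n]})=\gamma^1_f+\lambda^1_f$. So, as you say, smoothness of $\Sigma f$ is equivalent to $\lambda^0_f=0$. The genuine gap is precisely the one you name in your last paragraph, and it is worth stating why it is structural rather than technical: every inequality available in this paper (\thmref{thm:mainone}, \thmref{thm:mainmany}, \propref{prop:funbound}, and the conditional inequality $(\dagger)$ of \quesref{ques:otherinequal}) bounds from below an expression that is \emph{increasing} in $\lambda^0_f$, so each becomes easier to satisfy as $\lambda^0_f$ grows; no chain of such inequalities can force $\lambda^0_f=0$. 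Even an affirmative answer to \quesref{ques:otherinequal} would not close the argument, since $(\dagger)$ is again a lower bound and its equality case has no known characterization in terms of smoothness of $\Sigma f$. What would be needed is an \emph{upper} bound on $\lambda^0_f$ (equivalently on $b_n(F_{f,\0})$) in terms of $\mu^\circ$ and the sectional data, or an argument that strict inequality must hold when $\mult_\0\Sigma f\geq 2$; neither exists in the paper or in your proposal, which is why the conjecture remains open.
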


In the setting of the conjecture, for $\mbf p\in\Sigma f\backslash\{\0\}$ near $\0$, the reduced cohomology of the Milnor fiber $\widetilde H^*(F_{f,\mbf p};\Z)$ is zero in all degrees other than degree $n-1$, and $\widetilde H^{n-1}(F_{f,\mbf p};\Z)\cong \Z^{\mu^\circ}$, where $\mu^\circ$ is the Milnor number of $f$ restricted to a generic hyperplane slice through $\mbf p$. At $\0$, the generic value of $\lambda^1_f$ is $\mu^\circ(\mult_\0\Sigma f)$. If $\widetilde H^*(F_{f,\mbf 0})$ is zero in all degrees other than degree $n-1$, and $\widetilde H^{n-1}(F_{f,\0};\Z)\cong \Z^{\mu^\circ}$, then we would have to have that $\lambda^0_f=\mu^\circ(-1+\mult_\0\Sigma f)$. Furthermore, $\mu(f^{[n]})=\gamma^1_f+\lambda^1_f=\gamma^1_f+\mu^\circ(\mult_\0\Sigma f)$.

We had hoped that all of this, combined with  \thmref{thm:mainone}, would give us something interesting or maybe even answer the conjecture in the affirmative. Unfortunately, we do not see how  \thmref{thm:mainone} helps.

\bibliographystyle{plain}

\bibliography{Masseybib}

\end{document}